\numberwithin{equation}{section}
\newtheorem{theorem}{Theorem}[section]
\newtheorem{lemma}[theorem]{Lemma}
\newtheorem{remark}[theorem]{Remark}
\newcommand{\sign}{ \ensuremath{\mathrm{sgn} }}
\newcommand{\rank}{ \ensuremath{\mathrm{rank} }}
\newcommand{\spann}{ \ensuremath{\mathrm{span} }}
\newenvironment{proof}[1][Proof]{\noindent\textbf{#1.} }{\ \rule{0.5em}{0.5em}}
\begin{document}

\title{Asymptotics for eigenvalues of a non-linear integral system
}
\author{D.E.Edmunds
\and J.Lang} \maketitle

\begin{abstract}
Let $I=[a,b]\subset\mathbb{R},$ let $1< q , p<\infty,$ let $u$ and
$v$ be positive functions with $u\in L_{p^{\prime}}(I),$ $v\in
L_{q}(I)$ and let $T:L_p(I) \to L_q(I)$ be the Hardy-type operator
given by
\[
(Tf)(x)=v(x)\int\nolimits_{a}^{x}f(t)u(t)dt,\text{ }x\in I.
\]
We show that the asymptotic behavior of the eigenvalues $\lambda$
of the non-linear integral system
\[ g(x)=(Tf)(x) \qquad (f(x))_{(p)}=\lambda(T^{\ast}(g_{(p)}))(x)
\]
( where, for example, $t_{(p)}=|t|^{p-1} \sign(t)$) is given by
\[
\lim_{n\rightarrow\infty} n \hat{\lambda}_n(T)=c_{p,q}
\left(  \int\nolimits_{I}(uv)^{r}%
dt\right)  ^{1/r},\text{ for } 1<q<p<\infty.
\]
\[
\lim_{n\rightarrow\infty} n \check{\lambda}_n(T)=c_{p,q}
\left(  \int\nolimits_{I}(uv)^{r}%
dt\right)  ^{1/r},\text{ for } \ 1<p<q<\infty,
\]
Here $r=1/p^{\prime}+1/p$, $c_{p,q}$ is an explicit constant
depending only on $p$ and $q$, $\hat{\lambda}_n(T) = \max
(sp_n(T,p,q))$, $\check{\lambda}_n(T) = \min (sp_n(T,p,q))$ where
$sp_n(T,p,q)$ stands for the set of all eigenvalues $\lambda$
corresponding to eigenfunctions $g$ with $n$ zeros.
\end{abstract}

\section{Introduction and preliminaries}
Through this paper we shall assume $I=[a,b],$ where $-\infty<a<b<\infty,$ and let
$p,q \in (1,\infty)$, $(x)_{(p)}%
:=\left\vert x\right\vert ^{p-1}$ sgn $(x),$ $x\in\mathbb{R}$ and
$1/p'=1-1/p.$

Let $u$ and $v$ be positive functions on $I$, with $u\in
L_{p^{\prime}}(I),$ $v\in L_{q}(I)$.

Define the Hardy-type operator $T:L_p(I) \to L_q(I)$ by
\[
(Tf)(x)=v(x)\int\nolimits_{a}^{x}f(t)u(t)dt,\text{ }x\in I.
\]
Such maps have been intensively studied: see \cite[Chapter
2]{EE1}.

Since $|I|=b-a <\infty,$ $u\in L_{p'}(I)$ and $v\in L_q(I)$ then
$T$ is compact, see \cite[chapter 2]{EE2}.

As more detailed information about the native of the compactness
of a map is provided by its approximation, Kolmogorov and
Bernstein numbers, much attention has been paid to the asymptotic
behavior of these numbers for the map $T$. The analysis is
decidedly easier when $p=q$, and an account of the situation in
this case is given in \cite{EE2}. For the case $p \not = q$ we
refer to \cite{EL1}, \cite{EL2} in which a key role is played by
the non-linear integral system:
\begin{equation}
g(x)=(Tf)(x) \label{Eq 2.2}%
\end{equation}
and
\begin{equation}
(f(x))_{(p)}=\lambda(T^{\ast}(g_{(q)}))(x), \label{Eq 2.3}%
\end{equation}
where $g_{(q)}$ is the function with value $(g(x))_{(q)}$ at $x$
and $T^{\ast}$ is the map defined by
$(T^{\ast}f)(x)=u(x)\int\nolimits_{x}^{b}v(y)f(y)dy.$

The  non-linear system (\ref{Eq 2.2}) and (\ref{Eq 2.3}) gives us
the following non-linear equation:
\begin{equation}
(f(x))_{(p)}=\lambda T^{\ast}((Tf)_{(q)})(x). \label{Eq 2.3-one}%
\end{equation}

This is equivalent to its dual equation:
\begin{equation}
(s(x))_{(q')}=\lambda^{\ast}T((T^{\ast}s)_{(p')})(x) \label{Eq 2.3-dual}.%
\end{equation}
And we have this relation: For given $f$ and $\lambda$ satisfying
(\ref{Eq 2.3-one}) we have $s=(Tf)_{(q)}$ and $\lambda^{\ast}=
\lambda_{(p')}$ satisfying (\ref{Eq 2.3-dual}), and for given $s$
and $\lambda^*$ satisfying (\ref{Eq 2.3-dual})we have
$f=(T^*s)_{(p')}$ and $\lambda= \lambda^{\ast}_{(q)}$ satisfying
(\ref{Eq 2.3-one}).

 By a
spectral triple will be meant a triple $(g, f,\lambda)$ satisfying
(\ref{Eq 2.2}) and (\ref{Eq 2.3}), where $\left\Vert f\right\Vert
_{p}=1$; $(g, \lambda)$ will be called a spectral pair; the
function $g$ corresponding to $\lambda$ is called a spectral
function and the number $\lambda$ occurring in a spectral pair
will be called a spectral number.

For the system (\ref{Eq 2.2}) and (\ref{Eq 2.3}) we denote by
$SP(T,p,q)$ the set of all spectral triples; $sp(T,p,q)$ will
stand for the set of all spectral numbers $\lambda$ from
$SP(T,p,q)$.

It can be seen that this non-linear system is related to the
isoperimetric problem of determining

\begin{equation}
\sup_{g\in T(B)}\left\Vert g\right\Vert _{q}, \label{Eq 2.1}%
\end{equation}

where $B:=\{f\in L_{p}(I):\left\Vert f\right\Vert _{p}\leq1\}.$

Moreover, this problem can be seen as a natural generalization of
the $p,q-$Laplacian differential equation. For if $u$ and $v$ are
identically equal to $1$ on $I$, then (\ref{Eq 2.2}) and (\ref{Eq
2.3}) can be transformed into the $p,q-$Laplacian differential
equation:
\begin{equation}
-\left(  \left(  w^{\prime}\right)  _{(p)}\right)
^{\prime}=\lambda(w)_{(q)},
\label{Eq 2.4}%
\end{equation}
with the boundary condition
\begin{equation}
w(a)=0. \label{Eq 2.5}%
\end{equation}

  If $ g, f$ and $\lambda$ satisfy (\ref{Eq
2.2}) and (\ref{Eq 2.3}) then, the integrals being over $I$,
\begin{align*}
\int |g(x)|^q dx & = \int g (g)_{(q)} dx = \int Tf(x) (g)_{(q)} dx \\
& = \int f(x) T^*(g)_{(q)} dx=  \lambda^{-1} \int f(x) (f)_{(p)} \\
& = \lambda^{-1} \int |f(x)|^p dx  .
 \end{align*}
From this it follows that $\lambda^{-1}= \|g\|_q^q / \|f\|_p^p$
and then for $(g_1,\lambda_1) \in SP(T,p,q)$ we have
$\lambda_1^{-1/q}=\|g_1\|_q$.

 Given any continuous function $f$ on $I$ we denote by $Z(f)$ the
number of distinct zeros of $f$ on $\overset{o}{I,}$ and by $P(f)$
the number of sign changes of $f$ on this interval. The set of all
spectral triples $(g,f,\lambda)$ with $Z(g)=n$
$(n\in\mathbb{N}_{0})$ will be denoted by $SP_{n}(T,p,q),$ and
$sp_{n}(T,p,q)$ will represent the set of all corresponding
numbers $\lambda.$ We set $\hat{\lambda}_n = \max sp_{n}(T,p,q)$
and $\check{\lambda}_n = \min sp_{n}(T,p,q).$

Our main result is that the asymptotic behavior of the
$\hat{\lambda}_n$ can be determined when $1<q<p<\infty$: we show
that
\[
\lim_{n\rightarrow\infty} n \hat{\lambda}_n(T)=c_{p,q}
\left(  \int\nolimits_{I}(uv)^{r}%
dt\right)  ^{1/r},
\]
where $r=1/p'+1/q$ and $c_{pq}$ is a constant whose dependence on
$p$ and $q$ is given explicitly. A corresponding result holds for
$\check{\lambda}_n$ when $1<p<q<\infty$. Moreover, $sp_n(T,p,p)$
contains exactly one element, so that in this case
$\hat{\lambda}_n=\check{\lambda}_n= {\lambda}_n$ say, and the
asymptotic behavior of the ${\lambda}_n$ is given by the formula
above.

We now give some results to prepare for the major theorems in \S 2
and \S 3.

\begin{lemma}
\label{lemma 3.6 BT} Let $f\neq 0$ be a function on $[a,b]$ such
that $Tf(a)=Tf(b)=0$. Then $P(f) \ge 1$.
\end{lemma}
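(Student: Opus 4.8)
The plan is to argue by contradiction, exploiting nothing more than the positivity of $u$ and $v$ and the monotone structure of the integral defining $T$. Suppose $P(f)=0$, i.e. $f$ has no sign change on $\overset{o}{I}$. Replacing $f$ by $-f$ changes neither $P(f)$ nor the hypothesis $Tf(a)=Tf(b)=0$, so we may assume $f\ge 0$ a.e. on $I$.

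Next I would note that the condition $Tf(a)=0$ is automatic (the integral from $a$ to $a$ vanishes), so the content of the hypothesis is $Tf(b)=v(b)\int_a^b f(t)u(t)\,dt=0$; since $v$ is positive this forces $\int_a^b f(t)u(t)\,dt=0$. Now, because $f\ge 0$ and $u>0$ on $I$, the function $x\mapsto\int_a^x f(t)u(t)\,dt$ is non-negative and non-decreasing on $I$; vanishing at $x=b$, it must vanish identically on $I$. Hence the non-negative integrable function $fu$ has zero integral over every subinterval $[a,x]$, so $fu=0$ a.e., and since $u>0$ this gives $f=0$ a.e., contradicting $f\neq 0$. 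Therefore $P(f)\ge 1$.

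The only step needing any care — and the nearest thing to an obstacle — is the interpretation of "$f$ has no sign change" for a merely measurable representative $f\in L_p(I)$: one reads $P(f)=0$ as "$f\ge 0$ a.e. or $f\le 0$ a.e.", after which positivity of $u$ and $v$ does all the work. No compactness or spectral input is required; this lemma is purely a structural fact about the Hardy operator $T$, and it is what will later let us pass between the zeros of a spectral function $g=Tf$ and the sign changes of $f$.
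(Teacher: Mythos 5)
Your proof is correct and follows the same underlying idea as the paper's one-line proof, which invokes "positivity of $T$ and Rolle's theorem": both arguments reduce to the observation that, because $u,v>0$, the antiderivative $F(x)=\int_a^x f(t)u(t)\,dt$ cannot vanish at both endpoints without $f$ changing sign. In fact your monotonicity argument is the cleaner and more careful route, since it works directly for $f\in L_p$ without any differentiability assumption and handles the a.e.\ interpretation of $P(f)=0$, whereas a literal appeal to Rolle's theorem would only produce a zero of $f$, not a sign change, and requires the same contradiction step to finish.
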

\begin{proof}
This follows from the positivity of $T$ and Rolle's theorem.
\end{proof}

\begin{lemma}
\label{corollary 3 BT} Let $(g_i,f_i,\lambda_i) \in SP(T,p,q),$
$i=1,2$, $1<p,q <\infty$. Then for any $\varepsilon >0$,
\begin{equation}
P(Tf_1-\varepsilon Tf_2) \le P(Tf_1 -
\varepsilon^{(p-1)/(q-1)}(\lambda_2/\lambda_1)^{1/(q-1)} Tf_2).
\label{3 BT}
\end{equation}
If the function $f_1-\varepsilon f_2$ has a multiple zero and $
P(Tf_1 - \varepsilon^{(p-1)/(q-1)}(\lambda_2/\lambda_1)^{q/(q-1)}
Tf_2) <\infty$, then the inequality (\ref{3 BT}) is strict.
\end{lemma}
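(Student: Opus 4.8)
The plan is to transfer the whole question from the functions $f_1,f_2$ to $g_1=Tf_1$, $g_2=Tf_2$, using the eigenvalue relation (\ref{Eq 2.3}) and the elementary facts that $t\mapsto t_{(p)}$ is an odd, strictly increasing bijection of $\R$ with inverse $t\mapsto t_{(p')}$, together with the homogeneity identities $(\lambda h)_{(r)}=\lambda^{r-1}(h)_{(r)}$ and $\kappa\,(h)_{(r)}=(\kappa^{1/(r-1)}h)_{(r)}$, valid for $\lambda,\kappa>0$. First I would invert (\ref{Eq 2.3}) to get $f_i=\big(\lambda_iT^{*}((g_i)_{(q)})\big)_{(p')}$ (legitimate because $\lambda_i>0$, since $\lambda_i^{-1}=\|g_i\|_q^q/\|f_i\|_p^p$), pull the positive constants through $(\cdot)_{(p')}$, and then apply the homogeneity identities once through $(\cdot)_{(p')}$ and once through $(\cdot)_{(q)}$ to absorb the factor $\varepsilon$ and the ratio $\lambda_2/\lambda_1$. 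A short computation of this kind yields, pointwise on $I$,
\[
\sign\big(f_1-\varepsilon f_2\big)=\sign\!\Big(T^{*}\big((g_1)_{(q)}-(cg_2)_{(q)}\big)\Big),\qquad c=\varepsilon^{(p-1)/(q-1)}\,(\lambda_2/\lambda_1)^{1/(q-1)},
\]
where $c$ is exactly the multiplier on the right of (\ref{3 BT}). Hence $P(f_1-\varepsilon f_2)=P\big(T^{*}((g_1)_{(q)}-(cg_2)_{(q)})\big)$, and, $(\cdot)_{(q)}$ being strictly increasing, $P\big((g_1)_{(q)}-(cg_2)_{(q)}\big)=P(g_1-cg_2)=P(Tf_1-cTf_2)$.

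The second ingredient is that $T$ and $T^{*}$ cannot increase the number of sign changes. Since $v>0$, the sign changes of $(T\psi)(x)=v(x)\int_a^x\psi u$ are those of the primitive $x\mapsto\int_a^x\psi u$, which vanishes at $a$ and is monotone on each interval where $\psi$ keeps a fixed sign; counting sign changes interval by interval (the mechanism behind Lemma \ref{lemma 3.6 BT}) gives $P(T\psi)\le P(\psi)$, and, mirror-wise, $P(T^{*}\chi)\le P(\chi)$ using $x\mapsto\int_x^b v\chi$, which vanishes at $b$. Combining this with the previous paragraph,
\begin{align*}
P(Tf_1-\varepsilon Tf_2)&=P\big(T(f_1-\varepsilon f_2)\big)\le P(f_1-\varepsilon f_2)=P\big(T^{*}((g_1)_{(q)}-(cg_2)_{(q)})\big)\\
&\le P\big((g_1)_{(q)}-(cg_2)_{(q)}\big)=P(Tf_1-cTf_2),
\end{align*}
which is (\ref{3 BT}).

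For the strict version, suppose $f_1-\varepsilon f_2$ has a multiple zero at some $x_0\in\overset{o}{I}$ (a zero at which it does not change sign). By the sign identity, $T^{*}\big((g_1)_{(q)}-(cg_2)_{(q)}\big)$, and hence its primitive $G(x)=\int_x^b v\,[(g_1)_{(q)}-(cg_2)_{(q)}]$, has a zero without sign change at $x_0$; examining $G$ on the two sides of $x_0$ then forces the integrand $(g_1)_{(q)}-(cg_2)_{(q)}$, and so $g_1-cg_2$, to \emph{change} sign there (barring the degenerate case where the integrand vanishes a.e.\ on one side of $x_0$, which I would exclude under the standing hypotheses). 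Thus $x_0$ records a genuine sign change of $g_1-cg_2$ that is not seen by $T^{*}$, and, since $Tf_1-cTf_2$ has only finitely many sign changes by hypothesis, the interval-by-interval count can be run all the way to the endpoint and improves to $P\big(T^{*}((g_1)_{(q)}-(cg_2)_{(q)})\big)\le P(Tf_1-cTf_2)-1$; hence $P(Tf_1-\varepsilon Tf_2)<P(Tf_1-cTf_2)$, i.e.\ (\ref{3 BT}) is strict. (The exponent $q/(q-1)$ printed in the hypothesis should read $1/(q-1)$ to match $c$; only finiteness of the sign-change count is actually used.)

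The step I expect to be the main obstacle is not the algebra of the first paragraph but making the two inequalities $P(T\psi)\le P(\psi)$, $P(T^{*}\chi)\le P(\chi)$ — and especially the strict drop by one in the presence of a multiple zero — completely rigorous: one must be careful about the endpoints $a,b$ (where the primitives vanish), about subintervals on which the relevant integrand is a.e.\ zero, and about the low regularity of $u\in L_{p'}(I)$ and $v\in L_q(I)$. The redeeming point is that every function whose sign changes are being counted is, or has the same sign as, a continuous function given by an integral, so the Rolle/monotonicity arguments apply without ever differentiating $u$ or $v$.
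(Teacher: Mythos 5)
Your proof is correct and follows essentially the same route as the paper's: transfer through the sign-preserving maps $t\mapsto t_{(p)},t_{(q)}$ and the eigenvalue identity $(f_i)_{(p)}=\lambda_i T^{*}((g_i)_{(q)})$, combined with the fact that $T$ and $T^{*}$ do not increase the number of sign changes. The paper presents this as a single chain $P\le Z\le P\le\cdots$, whereas you first collapse the algebra into the clean pointwise identity $\sign(f_1-\varepsilon f_2)=\sign\big(T^{*}((g_1)_{(q)}-(cg_2)_{(q)})\big)$ and then apply the two inequalities $P(T\psi)\le P(\psi)$ and $P(T^{*}\chi)\le P(\chi)$ once each; that is a presentational gain (and makes it visible that several of the paper's ``$\le$'' steps are really equalities). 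You also supply an argument for the strict-inequality clause, which the paper states but does not actually prove inside the displayed chain, and you correctly flag the exponent $q/(q-1)$ in the hypothesis as a typo for $1/(q-1)$. The sketch of the strict case (a multiple zero of $T^{*}\chi$ forces an unrecorded sign change of $\chi$) is the right idea, although as you note the Rolle-type counting there would need to be made fully precise to stand on its own; this does not detract from the main inequality, which is complete.
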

\begin{proof} We will use Lemma \ref{lemma 3.6 BT} and the fact
that $\sign(a-b)=\sign((a)_{(p)}-(b)_{(p)})$.
\begin{align*}
 P(Tf_1-\varepsilon Tf_2) & \le Z(Tf_1-\varepsilon Tf_2) \le P(f_1-\varepsilon
 f_2) \\
 & \le P((f_1)_{(p)}-\varepsilon^{p-1}(f_2)_{(p)}) \\
 & (\text{ use } (\ref{Eq 2.3-one}) \text{ for } f_1 \text{ and } f_2), \\
 & \le P(\lambda_1
 T^*((g_1)_{(q)}) -\varepsilon^{p-1}  \lambda_2
 T^*((g_2)_{(q)})) \\
& \le Z(\lambda_1
 T^*((g_1)_{(q)}) -\varepsilon^{p-1}  \lambda_2
 T^*((g_2)_{(q)})) \\
& \le P(
 (g_1)_{(q)} -\varepsilon^{p-1} ( \lambda_2 / \lambda_1)
 (g_2)_{(q)}) \\
 & \le P( g_1 -\varepsilon^{(p-1)/(g-1)} (\lambda_2 /
 \lambda_1)^{1/(q-1)}
 g_2) \\
 & \le P(Tf_1 -
\varepsilon^{(p-1)/(q-1)}(\lambda_2/\lambda_1)^{1/(q-1)} Tf_2).
\end{align*}

\end{proof}

\begin{theorem}
\label{Theorem 2.2}For all $n\in\mathbb{N}$,
$SP_{n}(T,p,q)\neq\emptyset.$
\end{theorem}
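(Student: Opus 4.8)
The plan is to establish existence of a spectral triple with exactly $n$ zeros by a variational/minimax argument combined with the oscillation information encoded in Lemmas \ref{lemma 3.6 BT} and \ref{corollary 3 BT}. First I would set up the right functional: on the unit sphere $S=\{f\in L_p(I):\|f\|_p=1\}$ consider $J(f)=\|Tf\|_q^q$, whose supremum is the isoperimetric quantity \eqref{Eq 2.1}; a maximiser $f_0$ exists because $T$ is compact (so $J$ is weakly continuous on the bounded set $B$) and, after normalising, the Euler--Lagrange equation for the constrained problem is exactly \eqref{Eq 2.3-one}, i.e.\ $(f_0)_{(p)}=\lambda\,T^{\ast}((Tf_0)_{(q)})$ with $\lambda^{-1/q}=\|Tf_0\|_q$. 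This produces a spectral triple $(g_0,f_0,\lambda_0)$; one checks via Lemma \ref{lemma 3.6 BT} (and positivity of $T$, which forces $Tf_0$ to be monotone on each nodal interval of $f_0$) that the ground-state $g_0=Tf_0$ has no interior zero, i.e.\ $Z(g_0)=0$, so $SP_0(T,p,q)\neq\emptyset$.

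For general $n$ I would use a Lusternik--Schnirelmann / Courant-type construction. Let $\mathcal{F}_{n+1}$ be the family of subsets of $S$ of ``dimension'' $\ge n+1$ (e.g.\ images of odd continuous maps from $\mathbb{S}^{n}$, or sets of genus $\ge n+1$), and set
\[
\mu_n=\sup_{A\in\mathcal{F}_{n+1}}\ \inf_{f\in A}\ \|Tf\|_q^q .
\]
Compactness of $T$ gives a Palais--Smale-type condition, so each $\mu_n$ is attained at some $f_n\in S$ which, being a constrained critical point, again satisfies \eqref{Eq 2.3-one}; hence $(g_n,f_n,\lambda_n)$ with $g_n=Tf_n$, $\lambda_n=\|Tf_n\|_q^{-q}$ is a spectral triple. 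The decisive point is then the zero-count: I must show $Z(g_n)=n$ exactly. The lower bound $Z(g_n)\ge n$ follows from the minimax level, since a critical function at the $(n+1)$-st level must change sign at least $n$ times in the interior (a function with $P(g)\le n-1$ lies, up to the linear-like oscillation structure, in a set of dimension $\le n$, contradicting $A\in\mathcal{F}_{n+1}$). The upper bound $Z(g_n)\le n$ is where Lemma \ref{corollary 3 BT} enters: if some spectral function had more than $n$ sign changes at this level one derives, by comparing $f_n$ with a translate/rescale of a lower eigenfunction as in \eqref{3 BT}, a strict drop in the sign-change count that is incompatible with $g_n$ being a genuine critical point at level $\mu_n$ — this is essentially a Sturm-type separation argument powered by the monotonicity \eqref{3 BT}.

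Concretely, I would prove the theorem by induction on $n$. Having produced $(g_{n-1},f_{n-1},\lambda_{n-1})\in SP_{n-1}(T,p,q)$, split $I$ at the $n$-th candidate nodal point and run the $n=0$ (ground-state) construction on each of two subintervals with matching spectral number, then glue: the positivity of $T$ and Lemma \ref{lemma 3.6 BT} guarantee the glued function has exactly one new interior zero, giving a triple in $SP_n(T,p,q)$; one adjusts the interior nodal point by a continuity/intermediate-value argument so that the two one-sided spectral numbers coincide, which is exactly the shooting-method mechanism that works for the $p,q$-Laplacian \eqref{Eq 2.4}--\eqref{Eq 2.5} and extends here because $u,v>0$.

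The main obstacle I anticipate is the sharp zero-count $Z(g_n)=n$, and in particular ruling out ``too many'' zeros: nonlinearity ($p\neq q$) means the usual linear Sturm oscillation theory does not apply directly, and one must lean entirely on the nonlinear comparison inequality \eqref{3 BT} of Lemma \ref{corollary 3 BT} together with the strictness clause there (triggered by a multiple zero of $f_1-\varepsilon f_2$) to force the contradiction. Making the continuity/gluing step rigorous — i.e.\ showing the one-sided spectral number depends continuously and monotonically on the position of the free nodal point so the intermediate value theorem applies — is the other technical point, but it is routine once the one-interval existence and uniqueness-of-sign (ground state) facts are in hand.
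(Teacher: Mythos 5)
Your plan is genuinely different from the paper's proof, which does not use a Lusternik--Schnirelmann minimax or a shooting/gluing induction at all: it runs the Buslaev--Tikhomirov iteration $g_k=Tf_k$, $f_{k+1}=(\lambda_k T^{\ast}((g_k)_{(q)}))_{(p')}$ starting from step functions $f_0(\cdot,z)$ parametrised by $z$ on an $\ell_1$-sphere $\mathcal{O}_n\subset\mathbb{R}^{n+1}$, shows $\|g_k\|_q$ is monotone so the iteration converges to a spectral triple, and then gets the nodal count topologically: Borsuk's antipodal theorem applied to $z\mapsto(g_k(t_1,z),\dots,g_k(t_n,z))$ produces, for each $k$, a parameter at which $g_k$ vanishes at $n$ prescribed points, and the chain $Z(g_{k+1})\le P(f_{k+1})\le Z(f_{k+1})\le P(g_k)\le Z(g_k)$ makes the sets $F_k^n=\{z:Z(g_k(\cdot,z))\ge n\}$ nested and closed, so a common parameter survives to the limit. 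The only zero-counting input needed is the elementary Lemma \ref{lemma 3.6 BT}; Lemma \ref{corollary 3 BT} is not used for this theorem.

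Measured against that, your proposal has genuine gaps at exactly the two points you flag, and neither is routine. First, in the minimax route, the assertion that a critical point at level $\mu_n$ has \emph{exactly} $n$ interior zeros is not established: your lower-bound argument ("a function with $P(g)\le n-1$ lies in a set of dimension $\le n$") presupposes that $\{f:\;P(Tf)\le n-1\}$ has genus (or odd-map codimension) at most $n$, which is a linear-theory fact with no obvious nonlinear analogue here --- for $p\ne q$ the problem is not homogeneous, distinct minimax levels can coincide, and it is precisely because $sp_n(T,p,q)$ may contain many spectral numbers that the paper introduces $\hat\lambda_n$ and $\check\lambda_n$; your upper bound via Lemma \ref{corollary 3 BT} is only gestured at and the comparison function ("a translate/rescale of a lower eigenfunction") is not a legitimate competitor, since rescaling does not preserve the spectral equation when $p\ne q$. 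Second, the shooting/gluing induction in your third paragraph requires continuity \emph{and strict monotonicity} of the one-sided spectral number as a function of the free nodal point, together with a well-posed matching condition at the interior node (for the subinterval problem the natural conditions are $g=0$ at the left endpoint and $f=0$ at the right endpoint, and these must be glued consistently); none of this is proved, and for a weighted integral system with general $u\in L_{p'}$, $v\in L_q$ it is a substantive piece of work, not a routine extension of the $p,q$-Laplacian case. As written, the proposal is a programme whose hardest steps are acknowledged but not carried out, whereas the Borsuk-plus-iteration argument of the paper bypasses both difficulties.
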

\begin{proof}
This essentially follows ideas from \cite{BT1} (see also
\cite{N1}), but we give the details for the convenience of the
reader. For simplicity we suppose that $I$ is the interval
$[0,1].$ A key idea in the proof is the introduction of an
iterative procedure used in \cite{BT1}.

Let $n\in\mathbb{N}$ and define%
\[
\mathcal{O}_{n}=\left\{  z=(z_{1},...,z_{n+1})\in\mathbb{R}^{n+1}%
:\sum\nolimits_{i=1}^{n+1}\left\vert z_{i}\right\vert =1\right\}
\]
and%
\[
f_{0}(x,z)=sgn(z_{j})\text{ for
}\sum\nolimits_{i=0}^{j-1}\left\vert z_{i}\right\vert
<x<\sum\nolimits_{i=1}^{j}\left\vert z_{i}\right\vert ,\text{
}j=1,...,n+1,\text{ with }z_{0}=0.
\]
With $g_{0}(x,z)=Tf_{0}(x,z)$ we construct the iterative process%
\[
g_{k}(x,z)=Tf_{k}(x,z),\text{ }f_{k+1}(x,z)=(\lambda_{k}(z)T^{\ast}%
(g_{k}(x,z))_{(q)})_{(p^{\prime})},
\]
where $\lambda_{k}$ is a constant so chosen that%
\[
\left\Vert f_{k+1}\right\Vert _{p}=1
\]
and $1/p+1/p^{\prime}=1.$ Then, all integrals being over $I,$%
\begin{align*}
1 &  =\int\left\vert f_{k}(x,z)\right\vert ^{p}dx=\int f_{k}(f_{k}%
)_{(p)}dx=\int f_{k}\left(
[\lambda_{k-1}T^{\ast}((g_{k-1})_{(q)}\right)
]_{(p^{\prime})})_{(p)}dx\\
&  =\int f_{k}\lambda_{k-1}T^{\ast}((g_{k-1})_{(q)})dx\\
&  =\lambda_{k-1}\int T(f_{k})(g_{k-1})_{(q)}dx\leq\lambda_{k-1}%
\left\Vert g_{k}\right\Vert _{q}\left\Vert g_{k-1}\right\Vert _{q}^{q-1}%
\end{align*}
and also%
\begin{align*}
\left\Vert g_{k-1}\right\Vert _{q}^{q} &  =\int\left\vert g_{k-1}%
(x,z)\right\vert ^{q}dx=\int(g_{k-1})_{(q)}g_{k-1}dx\\
&  =\int(g_{k-1})_{(q)}T(f_{k-1})dx=\int T^{\ast}((g_{k-1})_{(q)})f_{k-1}dx\\
&  =\lambda_{k-1}^{-1}\int\lambda_{k-1}T^{\ast}((g_{k-1})_{(q)}%
)f_{k-1}dx\\
&  \leq\lambda_{k-1}^{-1}\left(  \int\left\vert (\lambda_{k-1}
T^{\ast }((g_{k-1})_{(q)})_{(p^{\prime})}\right\vert
^{p^{\prime}}dx\right) ^{1/p^{\prime}}\left(  \int\left\vert
f_{k-1}\right\vert ^{p}dx\right)
^{1/p}\\
&  =\lambda_{k-1}^{-1}\left(  \int\left\vert (\lambda_{k-1}T^{\ast
}((g_{k-1})_{(q)})_{(p^{\prime})}\right\vert
^{p^{\prime}}dx\right)
^{1/p^{\prime}}\\
&  =\lambda_{k-1}^{-1}\left(  \int\left\vert f_{k}\right\vert
^{p}dx\right) ^{1/p}=\lambda_{k-1}^{-1}.
\end{align*}
From these inequalities it follows that%
\[
\left\Vert g_{k-1}(\cdot,z)\right\Vert _{q}\leq\lambda_{k-1}^{-1/q}%
\leq\left\Vert g_{k}(\cdot,z)\right\Vert _{q}.
\]
This shows that the sequences $\{g_{k}(\cdot,z)\}$ and
$\{\lambda_{k}^{-1/q}(z)\}$ are monotonic increasing. Put
$\lambda(z)=\lim_{k\rightarrow\infty}\lambda _{k}(z);$ then
$\left\Vert g_{k}(\cdot,z)\right\Vert _{q}\rightarrow
\lambda^{-1/q}(z).$

As the sequence $\{f_{k}(\cdot,z)\}$ is bounded in $L_{p}(I),$
there is a subsequence $\{f_{k_{i}}(\cdot,z)\}$ that is weakly
convergent, to $f(\cdot,z),$ say. Since $T$ is compact,
$g_{k_{i}}(\cdot,z)\rightarrow Tf(\cdot,z):=g(\cdot,z)$ and we
also have $f(\cdot,z)=\left(  \lambda
(z)T^{\ast}(g(\cdot,z)\right)  _{(q)})_{(p^{\prime})}.$ It follows
that for each $z\in\mathcal{O}_{n},$ the sequence
$\{g_{k_{i}}(\cdot,z)\}$ converges to a spectral function.

Now set $z=(0,0,...,0,1)\in\mathcal{O}_{n}.$ Then
$f_{0}(\cdot,z)=1,$ and as the operators $T$ and $T^{\ast}$ are
positive, $g_{k}(\cdot,z)\geq0$ for all $k,$ so that
$g(\cdot,z)\geq0.$ Thus $(g(\cdot,z), f(\cdot,z), \lambda(z))\in
SP_{0}(T,p,q):$ $SP_{0}(T,p,q)\neq\emptyset.$

Next we show that for all
$n\in\mathbb{N},SP_{n}(T,p,q)\neq\emptyset.$ Given
$n,k\in\mathbb{N},$ set%
\[
E_{k}^{n}=\{z\in\mathcal{O}_{n}:Z(g_{k}(\cdot,z))\leq n-1\}.
\]
From the definition of $T$ it follows that $g_{k}(\cdot,z)$
depends continuously on $z;$ thus $E_{k}^{n}$ is an open subset of
$\mathcal{O}_{n}$ and $F_{k}^{n}:=\mathcal{O}_{n}\backslash
E_{k}^{n}$ is a closed subset of
$\mathcal{O}_{n}.$ Let $0<t_{1}<...<t_{n}<1$ and put%
\[
F_{k}(\alpha)=(g_{k}(t_{1},\alpha),...,g_{k}(t_{n},\alpha)),\text{
}\alpha \in\mathcal{O}_{n}.
\]
Then $F_{k}$ is a continuous, odd mapping from $\mathcal{O}_{n}$
to
$\mathbb{R}^{n}.$ By Borsuk's theorem, there is a point $\alpha_{k}%
\in\mathcal{O}_{n}$ such that $F_{k}(\alpha_{k})=0;$ that is,
$\alpha_{k}\in F_{k}^{n}.$ From the definition of $g_{k}$ and
$f_{k+1},$ together with the
positivity of $T$ and $T^{\ast},$ we have%
\[
Z(g_{k+1})\leq P(f_{k+1})\leq Z(f_{k+1})\leq P(g_{k})\leq
Z(g_{k}),
\]
so that $E_{k}^{n}\subset E_{k+1}^{n},$ which implies that
$F_{k}^{n}\supset
F_{k+1}^{n}.$ Hence there exists $\widetilde{\alpha}\in\cap_{k\geq1}F_{k}%
^{n},$ and as above we see that $g_{k}(\cdot,\widetilde{\alpha})$
converges, as $k\rightarrow\infty,$ to a spectral function
$g(\cdot,\widetilde{\alpha })\in SP_{n}(T,p,q).$ Thus
$SP_{n}(T,p,q)\neq\emptyset$ and the proof is complete.
\end{proof}

We note that the previous theorem is true for much more general
integral operators (i.e. integral operators with totally positive
kernel, see \cite{N1}).

 We now define Kolmogorov widths $d_n(T)$ for $T$ as a map from
$L_{p}(I)$ to $L_{q}(I)$ when $1<q , p<\infty.$ These numbers are
defined by:
\[
d_n(T)=d_{n}=\inf_{X_{n}}\sup_{\|f\|_{p,I}\le 1} \inf_{g\in X_n}
\left\Vert Tf-g \right\Vert _{q,I}/\left\Vert f\right\Vert _{p,I},
\qquad n\in \mathbb{N}
\]
where the infimum is taken over all $n$-dimensional subspaces
$X_{n}$ of $L_{q}(I)$.

To get an upper estimate for eigenvalues via the Kolmogorov
numbers, we start by recalling the Makovoz lemma (see 3.11 in
\cite{BT1}).

\begin{lemma}
\label{Makovoz}Let $U_n \subset \{Tf;\|f\|_{p,I} \le 1 \}$ be a
continuous and odd image of the sphere $S^n$ in $\mathbb{R}^n$
endowed with the $l_1$ norm. Then
\[ d_n(T) \ge \inf \{ \|x\|_{q,I}, x\in U_n\}
\]
\end{lemma}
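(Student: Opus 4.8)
The plan is to prove the Makovoz lemma by contradiction, combining the definition of the Kolmogorov width with a topological (Borsuk-type) argument on the sphere $S^n$. Suppose, for contradiction, that $d_n(T) < \inf\{\|x\|_{q,I} : x \in U_n\} =: \delta$. Then by the definition of $d_n$, there is an $n$-dimensional subspace $X_n \subset L_q(I)$ achieving (up to a small margin) the approximation: for every $f$ with $\|f\|_{p,I} \le 1$, $\inf_{g \in X_n}\|Tf - g\|_{q,I} < \delta$. In particular this holds for every $Tf \in U_n$, since every element of $U_n$ is of the form $Tf$ with $\|f\|_{p,I} \le 1$.

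**Next I would** exploit the structure of $U_n$ as a continuous odd image of $S^n$. Write $U_n = \Phi(S^n)$ where $\Phi : S^n \to L_q(I)$ is continuous and odd, with $\Phi(s) = Tf_s$ for a suitable family of admissible $f_s$. Let $Q : L_q(I) \to X_n$ denote a metric projection (or, to avoid non-uniqueness issues, any continuous map into $X_n$ that realizes the approximation up to the margin — one can take a linear projection onto $X_n$ after noting $X_n$ is finite-dimensional, or smooth the metric projection). Then consider the map $s \mapsto \Phi(s) - Q(\Phi(s))$, and more to the point its "coordinates" in a basis of $X_n$: define $\Psi : S^n \to \mathbb{R}^n$ by taking the $n$ linear coordinates of $Q(\Phi(s))$ with respect to a fixed basis of $X_n$. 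If $Q$ can be chosen odd (which is arranged by symmetrizing: replace $Q(x)$ by $\tfrac12(Q(x) - (-Q(-x)))$, legitimate because $U_n$ is symmetric and the approximation bound is even in $x$), then $\Psi$ is continuous and odd from $S^n$ to $\mathbb{R}^n$. By the Borsuk–Ulam theorem there exists $s_0 \in S^n$ with $\Psi(s_0) = 0$, i.e. $Q(\Phi(s_0)) = 0$.

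**Then** at this point $s_0$ we have $\inf_{g \in X_n}\|\Phi(s_0) - g\|_{q,I} = \|\Phi(s_0) - Q(\Phi(s_0))\|_{q,I} = \|\Phi(s_0)\|_{q,I} \ge \delta$, since $\Phi(s_0) \in U_n$. But this contradicts the bound $\inf_{g\in X_n}\|Tf - g\|_{q,I} < \delta$ applied to $f = f_{s_0}$, which is admissible. Hence no such $X_n$ exists, and $d_n(T) \ge \delta = \inf\{\|x\|_{q,I} : x \in U_n\}$, as claimed.

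**The main obstacle** I anticipate is the care needed in arranging the map into $X_n$ to be simultaneously continuous, odd, and (nearly) best-approximating — the metric projection onto $X_n$ in $L_q(I)$ for $q \ne 2$ is single-valued (by strict convexity of $L_q$ for $1<q<\infty$) and continuous, so oddness follows automatically from the oddness of $U_n$, but one should state this cleanly rather than invoke symmetrization. A secondary subtlety is that $d_n$ is defined as an infimum over subspaces, so one works with a near-optimal $X_n$ and lets the margin shrink at the end; equivalently, one fixes $\varepsilon > 0$, takes $X_n$ with $\sup_{\|f\|_p \le 1}\inf_{g\in X_n}\|Tf-g\|_q < d_n(T) + \varepsilon$, runs the Borsuk argument to get $\|\Phi(s_0)\|_{q,I} \le d_n(T) + \varepsilon$, whence $\inf_{x\in U_n}\|x\|_{q,I} \le d_n(T) + \varepsilon$, and lets $\varepsilon \to 0$.
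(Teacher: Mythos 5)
Your argument is correct, and in fact the paper offers no proof of this lemma at all --- it is quoted from Buslaev--Tikhomirov \cite{BT1} (Makovoz's lemma), and your proof is essentially the standard one: apply Borsuk's antipodal theorem to the composition of the odd parametrization of $U_n$ with the metric projection onto a candidate subspace $X_n$, locate a point whose best approximation from $X_n$ is $0$, and conclude that the deviation of $U_n$ from $X_n$ is at least $\inf\{\|x\|_{q,I}:x\in U_n\}$. Two small remarks. First, for the dimension count in Borsuk's theorem to work, the sphere must be $n$-dimensional; the statement's ``$S^n$ in $\mathbb{R}^n$'' should be read as the $\ell_1$ unit sphere of $\mathbb{R}^{n+1}$ (exactly the set $\mathcal{O}_n$ used in Theorem \ref{Theorem 2.2}), and you implicitly use the correct count $S^n\to\mathbb{R}^n$. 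Second, your parenthetical fallback of replacing the metric projection by ``a linear projection onto $X_n$'' would not work: a bounded linear projection only gives $\|x-Px\|\le(1+\|P\|)\,\mathrm{dist}(x,X_n)$, so $P(\Phi(s_0))=0$ would not yield $\mathrm{dist}(\Phi(s_0),X_n)=\|\Phi(s_0)\|$. You correctly retract this in your final paragraph: since $L_q(I)$ with $1<q<\infty$ is strictly (indeed uniformly) convex and $X_n$ is finite-dimensional, the metric projection is single-valued, continuous and automatically odd, which is all that is needed; the $\varepsilon$-margin bookkeeping at the end is also handled correctly.
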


\begin{lemma}
  \label{dn lambdan} If $n >1$, then
  $ d_n(T) \ge \hat{\lambda}^{-1/q} $ where
  $\hat{\lambda}= \max \{\lambda \in \cup_{i=0}^n
sp_i(p,q)\}.$
\end{lemma}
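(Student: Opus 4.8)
The plan is to combine the previous two lemmas: Makovoz's lemma (Lemma~\ref{Makovoz}) gives a lower bound for $d_n(T)$ in terms of any continuous odd image of a sphere lying in $T(B)$, and the construction in the proof of Theorem~\ref{Theorem 2.2} produces exactly such an image together with enough spectral data to control the $L_q$-norm on it. Concretely, I would work with the set $\{g_k(\cdot,z):z\in\mathcal{O}_n\}$ for a suitable finite iterate $k$, or pass to a limiting object, and show that its elements all have $L_q$-norm bounded below by $\hat\lambda^{-1/q}$.

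First I would recall from the proof of Theorem~\ref{Theorem 2.2} that for each $z\in\mathcal{O}_n$ the sequence $\|g_k(\cdot,z)\|_q$ is monotone increasing in $k$ with $\|g_0(\cdot,z)\|_q\le\lambda_k^{-1/q}(z)\le\|g_{k}(\cdot,z)\|_q$, and that $g_k(\cdot,z)=Tf_k(\cdot,z)$ with $\|f_k(\cdot,z)\|_p=1$, so $g_k(\cdot,z)\in T(B)$. The map $z\mapsto g_k(\cdot,z)$ is continuous (as noted in that proof, $g_k$ depends continuously on $z$) and odd, because $f_0(\cdot,-z)=-f_0(\cdot,z)$ and each step of the iteration ($T$, $T^*$, the odd power maps $(\cdot)_{(q)}$, $(\cdot)_{(p')}$, and multiplication by the normalizing constant $\lambda_k$, which is even in $z$) preserves oddness. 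Hence, for each fixed $k$, $U_n:=\{g_k(\cdot,z):z\in\mathcal{O}_n\}$ is a continuous odd image of $S^n$ inside $T(B)$, and Lemma~\ref{Makovoz} yields $d_n(T)\ge\inf_{z\in\mathcal{O}_n}\|g_k(\cdot,z)\|_q$.

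Next I would bound that infimum from below. For any $z$ and any $k$ we have $\|g_k(\cdot,z)\|_q\ge\lambda_{k-1}^{-1/q}(z)$; letting $k\to\infty$ along the subsequence from the proof of Theorem~\ref{Theorem 2.2}, $g_{k_i}(\cdot,z)$ converges to a spectral function $g(\cdot,z)$ with spectral number $\lambda(z)=\lim_k\lambda_k(z)$, and $\|g(\cdot,z)\|_q=\lambda(z)^{-1/q}$. The number of zeros only decreases along the iteration, $Z(g_{k+1})\le Z(g_k)$, so if we start the construction so that $g_0(\cdot,z)$ has at most $n$ zeros (which holds on all of $\mathcal{O}_n$ once $n>1$, since $f_0(\cdot,z)$ has at most $n$ sign changes), then the limiting spectral function $g(\cdot,z)$ has $Z(g(\cdot,z))\le n$, i.e.\ its spectral number lies in $\cup_{i=0}^n sp_i(p,q)$, hence $\lambda(z)\le\hat\lambda$ and $\|g(\cdot,z)\|_q=\lambda(z)^{-1/q}\ge\hat\lambda^{-1/q}$. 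Therefore $\inf_{z}\|g(\cdot,z)\|_q\ge\hat\lambda^{-1/q}$. Replacing the finite iterate in $U_n$ by a fixed large $k$ and using the uniform monotonicity $\|g_k(\cdot,z)\|_q\ge\|g_0(\cdot,z)\|_q$ together with a compactness/continuity argument to transfer the lower bound $\hat\lambda^{-1/q}$ from the limits back to the $g_k$'s (or, more cleanly, applying Makovoz directly to the limiting image $\{g(\cdot,z):z\in\mathcal{O}_n\}$, which is still a continuous odd image of $S^n$ in $T(B)$), we conclude $d_n(T)\ge\hat\lambda^{-1/q}$.

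The main obstacle I anticipate is the uniformity in $z$: the subsequence $\{k_i\}$ extracted in the proof of Theorem~\ref{Theorem 2.2} depends on $z$, so one must argue carefully that $z\mapsto g(\cdot,z)$ is well-defined and continuous on all of $\mathcal{O}_n$, or else avoid the limit altogether by running Makovoz on a single iterate and exploiting monotonicity. A clean route is: for fixed $k$, $\inf_z\|g_k(\cdot,z)\|_q$ is attained (by continuity and compactness of $\mathcal{O}_n$) at some $z_k$; extract a convergent subsequence of $z_k$; and show the corresponding limit is a spectral function with $\le n$ zeros whose norm is $\inf_k\inf_z\|g_k(\cdot,z)\|_q\ge\hat\lambda^{-1/q}$. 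The only other point needing care is the strict inequality $n>1$, used so that $g_0(\cdot,z)$ genuinely has at most $n$ interior zeros for every $z\in\mathcal{O}_n$ rather than possibly more; this is where the hypothesis of the lemma enters.
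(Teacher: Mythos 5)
Your proposal is correct and follows essentially the same route as the paper: apply Makovoz's lemma to the continuous odd images $\{g_k(\cdot,z):z\in\mathcal{O}_n\}$ produced by the iteration of Theorem~\ref{Theorem 2.2}, then justify interchanging $\min_{z}$ with $\lim_{k}$ using compactness of $\mathcal{O}_n$, continuity in $z$ and monotonicity in $k$, so that the lower bound $\hat\lambda^{-1/q}$ for the limiting spectral functions (which have at most $n$ zeros) transfers to the widths. The only difference is cosmetic: the paper performs that interchange via the nested closed sets $H_k(\varepsilon)$ and the finite intersection property, whereas you use a minimizing-sequence/diagonal argument, which is an equivalent use of the same compactness.
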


\begin{proof}Let us denote
$\hat{\lambda}= \max \{\lambda \in \cup_{i=0}^n sp_i(p,q)\}.$ The
iteration process from the proof of Theorem \ref{Theorem 2.2}
gives us for each $k\in \mathbb{N}$ and $z\in \mathcal{O}_{n}$ a
function $g_k(.,z)$. By the Makavoz lemma we have
\begin{equation} d_n(T) \ge \max_{k\in \mathbb{N}} \min_{z\in \mathcal{O}_{n}}
\|g_k(.,z)\|_{q,I}. \label{a}
\end{equation}
Let us suppose that we have
\begin{equation}  \min_{z\in \mathcal{O}_{n}}
\lim_{k\to \infty} \|g_k(.,z)\|_{q,} = \max_{k\in \mathbb{N}}
\min_{z\in \mathcal{O}_{n}} \|g_k(.,z)\|_q. \label{b}
\end{equation}
 Then from (\ref{a}) and (\ref{b}) it follows that
 \[ d_n(T) \ge \min_{z\in \mathcal{O}_{n}}
\lim_{k\to \infty} \|g_k(.,z)\|_{q} \ge \hat{\lambda}^{-1/q},
\]
since $ \lim_{k\to \infty} g_k(.,z) \in SP(T,p,q).$ We have to
prove (\ref{b}). From the monotonicity of $\|g_k(.,z)\|_{q,I}$ we
have
\[\max_{k\in \mathbb{N}} \min_{z\in
\mathcal{O}_{n}} \|g_k(.,z)\|_q = \lim_{k\to \infty} \min_{z\in
\mathcal{O}_{n}}
 \|g_k(.,z)\|_{q,}. \]

From $\max \ \min \le \min \ \max$ it follows that
\[l:=\lim_{k\to \infty} \min_{z\in
\mathcal{O}_{n}}
 \|g_k(.,z)\|_{q,}=\max_{k\in \mathbb{N}} \min_{z\in
\mathcal{O}_{n}} \|g_k(.,z)\|_q \]
\[ \le \min_{z\in \mathcal{O}_{n}}
\max_{k\in \mathbb{N}}  \|g_k(.,z)\|_q=  \min_{z\in
\mathcal{O}_{n}} \lim_{k\to \infty}
 \|g_k(.,z)\|_{q,}=:h
 \]
Denote $H_k(\varepsilon)=\{z\in \mathcal{O}_{n}; \|g_k(.,z)\|_q\le
h-\varepsilon \}$ where $0<\varepsilon \le h$.

Since the mapping $z \mapsto g_k(.,z)$ is continuous,
$H_k(\varepsilon)$ is a closed subset of $\mathcal{O}_{n}$, and
from the construction of the sequence $g_k$ we see that
$H_0(\varepsilon) \supset H_1(\varepsilon) \supset ...$.

If $y_0\in \cap_{k\in \mathbb{N}} H_k(\varepsilon) \neq \emptyset$
then $h= \min_{z\in \mathcal{O}_{n}} \lim_{k\to \infty}
 \|g_k(.,z)\|_{q} \le \lim_{k\to \infty} \|g_k(.,y_0)\|_q\le
 h-\varepsilon $ is a contradiction. Then there exist $k_0 \in \mathbb{N}$
 such that $H_k(\varepsilon) = \emptyset$ for $k \ge k_0$ and
$\min_{z\in \mathcal{O}_{n}}  \|g_k(.,z)\|_{q} \ge h-\varepsilon$
for $k \ge k_0$. Then we have that $h=l$ and (\ref{b}) is proved.
\end{proof}

Next we define Bernstein widths which will help us in section 3.
The Bernstein widths $b_n(T)$ for $T$: $L_p(I) \to L_q(I)$ when
$1<p,q<\infty$ are defined by:
\[b_n(T):=\sup_{X_{n+1}} \inf_{Tf \in X_{n+1} \setminus \{0\}}
\|Tf\|_{q,I} / \|f\|_{p,I}, \] where the supremum is taken over
all subspaces $X_{n+1}$ of $T(L_p(I))$ with dimension $n+1$. Since
$u$ and $v$ are positive functions, the Bernstein widths can be
expressed as
\[b_n(T)=\sup_{X_{n+1}} \inf_{\alpha \in \mathbb{R}^n \setminus \{0\}}
{{\|T \left( \sum_{i=1}^{n+1} \alpha_i f_i \right) \|_{q,I}} \over
{\| \sum_{i=1}^{n+1} \alpha_i f_i \|_{p,I}}},
\]
where the supremum is taken over all $(n+1)$-dimensional subspaces
$X_{n+1}= \spann \{f_1, ... ,f_{n+1}\} \subset L_p(I)$.

Now we use techniques from Theorem \ref{Theorem 2.2} to obtain an
upper estimate for the Bernstein widths.

\begin{lemma}
  \label{bn<lambdan} If $n >1$ then
  $ b_n(T) \le \check{\lambda}^{-1/q} $, where
  $\check{\lambda} = \min(
sp_n(p,q)).$
\end{lemma}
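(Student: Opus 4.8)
The plan is to show that no $(n+1)$-dimensional subspace of $T(L_p(I))$ can have its Bernstein ratio exceed $\check{\lambda}^{-1/q}$, by producing on any such subspace a nonzero element whose ratio is at most $\check{\lambda}^{-1/q}$. So fix an arbitrary $(n+1)$-dimensional subspace $X_{n+1} = \spann\{Tf_1,\dots,Tf_{n+1}\} \subset T(L_p(I))$; by homogeneity it suffices to find $0 \neq Tf \in X_{n+1}$ with $\|Tf\|_{q,I}/\|f\|_{p,I} \le \check{\lambda}^{-1/q}$. The natural way to locate such an element is a topological (Borsuk-type) argument mirroring the one in Theorem \ref{Theorem 2.2}: parametrize the unit sphere of coefficient vectors $\alpha = (\alpha_1,\dots,\alpha_{n+1}) \in \mathcal{O}_n$ (with the $l_1$ norm), let $f_\alpha = \sum_i \alpha_i f_i$ and $g_\alpha = Tf_\alpha = \sum_i \alpha_i Tf_i$, and seek an $\alpha$ for which $g_\alpha$ has at least $n$ interior zeros, i.e. $Z(g_\alpha) \ge n$.

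To produce such an $\alpha$, pick any $0 < t_1 < \dots < t_n < 1$ (taking $I = [0,1]$ as before) and consider the continuous odd map $\alpha \mapsto (g_\alpha(t_1),\dots,g_\alpha(t_n))$ from $\mathcal{O}_n \cong S^n$ to $\mathbb{R}^n$; Borsuk's theorem gives a zero $\alpha^*$, so $g_{\alpha^*}$ vanishes at the $n$ prescribed points. Actually, to get a genuinely useful bound one wants these $n$ zeros to be exploited via the variational characterization of $\check{\lambda}$: on the intervals cut out by the zeros of $g_{\alpha^*}$, one runs the iteration from Theorem \ref{Theorem 2.2} restricted to each subinterval, or more directly invokes that $\check{\lambda}_n = \min sp_n(T,p,q)$ is the smallest spectral number attached to an eigenfunction with $n$ zeros. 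The key comparison step is the Ritz-type inequality: if $g = Tf$ has $n$ interior zeros, then splitting $I$ at those zeros into $n+1$ pieces $I_1,\dots,I_{n+1}$ and using that on each $I_j$ the quantity $\|Tf\|_{q,I_j}^q/\|f\|_{p,I_j}^p$ is bounded below by the corresponding first spectral number $\mu_j^{-1}$ for the operator $T$ restricted to $I_j$ (which is the $SP_0$ problem on $I_j$), one concludes $\|Tf\|_{q,I}^q/\|f\|_{p,I}^p \le \max_j \mu_j^{-1}$... and the sub-interval spectral problems reassemble to show this max is $\le \check\lambda^{-1}$; I would use Lemma \ref{corollary 3 BT} (the strict-inequality monotonicity of sign changes under the nonlinear flow) to rule out a subspace on which every element has fewer than $n$ zeros, which is the mechanism forcing $Z(g_{\alpha^*}) \ge n$ rather than merely the $n$ prescribed nodal values being zero.

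More concretely, here is the cleaner route I would actually write. Suppose for contradiction that $b_n(T) > \check\lambda^{-1/q}$, so there is an $(n+1)$-dimensional $X_{n+1} \subset T(L_p(I))$ with $\|Tf\|_q/\|f\|_p > \check\lambda^{-1/q}$ for all $0 \ne Tf \in X_{n+1}$. Take an eigenfunction $g_0$ with $Z(g_0) = n$ realizing the spectral number $\check\lambda = \check\lambda_n$, with eigenfunction $f_0$, $\|f_0\|_p = 1$, $\|g_0\|_q = \check\lambda^{-1/q}$. The space $X_{n+1}$ has dimension $n+1$, and the $n$ nodal points of $g_0$ impose $n$ linear conditions on $X_{n+1}$; hence there is $0 \ne Tf \in X_{n+1}$ vanishing at all $n$ zeros of $g_0$. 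Now run the argument of Lemma \ref{corollary 3 BT} with $(g_1,f_1,\lambda_1) = (g_0,f_0,\check\lambda)$ and $g_2 = Tf$: sliding $\varepsilon$ and tracking sign changes, one shows that if $\|Tf\|_q/\|f\|_p > \check\lambda^{-1/q}$ then $P(Tf) \ge P(g_0) = n$ forces $Tf$ to have strictly more nodal behavior than is compatible with lying in the same low-dimensional spectral stratum, contradicting minimality of $\check\lambda$ in $sp_n$. I expect the main obstacle to be exactly this last comparison: making rigorous that a nonzero combination $Tf \in X_{n+1}$ vanishing at the $n$ zeros of the minimal eigenfunction must itself have Rayleigh-type ratio $\le \check\lambda^{-1/q}$ — this needs the restriction-to-subintervals argument together with the fact (from Theorem \ref{Theorem 2.2} applied on each subinterval) that the smallest $SP_0$ spectral number on a subinterval dominates $\check\lambda_n$, plus Lemma \ref{lemma 3.6 BT} to control $P$ versus $Z$ at the patched-together nodal points. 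The bookkeeping of which power of $\varepsilon$ and which ratio $(\lambda_2/\lambda_1)$ appears — as in the statement of Lemma \ref{corollary 3 BT} — is the delicate part, but it is routine once the topological selection of $Tf$ is in place.
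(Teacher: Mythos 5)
There is a genuine gap: neither of your two routes closes the key comparison step, and the mechanism the paper actually uses --- the monotone iteration from Theorem \ref{Theorem 2.2} --- is absent from your proposal. The paper argues by contradiction: assuming a subspace $X_{n+1}=\spann\{f_1,\dots,f_{n+1}\}$ whose Bernstein ratio is everywhere strictly above $\check\lambda^{-1/q}$, it runs the iteration $g_k=Th_k$, $h_{k+1}=(\lambda_k T^{*}((g_k)_{(q)}))_{(p')}$ starting from points of the normalized sphere $O_n$ of that subspace. Two facts imported from Theorem \ref{Theorem 2.2} then do all the work: (i) $\|g_k\|_{q}$ is nondecreasing in $k$, so the starting function satisfies $\|\overline{g_0}\|_{q}\le\lim_{k\to\infty}\|g_k\|_{q}$; and (ii) the Borsuk argument applied at every stage $k$ yields a starting point $\overline{g_0}\in O_n$ whose iteration converges to an eigenfunction with $n$ zeros, whose spectral number is at least $\check\lambda$, whence $\lim_{k}\|g_k\|_{q}=\lambda^{-1/q}\le\check\lambda^{-1/q}$. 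Evaluating the assumed lower bound at the single point $\overline{g_0}$ gives the contradiction. Nothing in your write-up supplies a substitute for the monotonicity (i), which is precisely what converts ``the iteration limit is an $n$-zero eigenfunction'' into a bound on an actual element of the given subspace.

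Concretely: your first route's ``Ritz-type'' reassembly --- that the first spectral numbers $\mu_j$ of $T$ restricted to the nodal subintervals of $g_{\alpha^{*}}$ satisfy $\max_j\mu_j^{-1}\le\check\lambda^{-1}$ --- is asserted but not proved, and is not obvious: the zeros of $g_{\alpha^{*}}$ sit at arbitrarily prescribed points $t_1<\dots<t_n$, not at the nodal points of any extremal eigenfunction, so there is no a priori relation between the $SP_0$ spectral numbers of those pieces and $\check\lambda_n$. Your second route misapplies Lemma \ref{corollary 3 BT}: that lemma compares two spectral triples, i.e.\ both functions must be eigenfunctions, whereas your $Tf\in X_{n+1}$ vanishing at the zeros of $g_0$ is an arbitrary element of the subspace. (A comparison of an eigenfunction with an arbitrary competitor vanishing at its zeros is how Lemma \ref{an lambdan} proceeds, but that argument uses Jensen's inequality in the direction requiring $q\le p$ and produces the opposite bound, so it cannot serve here.) You correctly flag both of these as the main obstacles, but you do not overcome them; the missing idea is the iterative scheme described above.
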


\begin{proof}
Suppose there exists a linearly independent system of functions
$\{f_1, ... , f_{n+1}\}$ on $I$,  such that:
\[  \min_{\alpha \in \mathbb{R}^n \setminus \{0\}}
{{\|T \left( \sum_{i=1}^{n+1} \alpha_i f_i \right) \|_{q,I}} \over
{\| \sum_{i=1}^{n+1} \alpha_i f_i \|_{p,I}}} >
\check{\lambda}^{-1/q}.
\]

Let us define the $n-$dimensional sphere
\[ O_n=\left\{ T \left( \sum_{i=1}^{n+1} \alpha_i f_i \right),
\| \sum_{i=1}^{n+1} \alpha_i f_i \|_{p,I}=1 \right\}.
\]

Let $g_0(.)\in O_n$ and define a sequence of functions $h_k(.),
g_k(.)=g_k(.,g_0), k\in \mathbb{N}$, according to the following
rule:
\[ g_k(x)=Th_k(x), \qquad h_{k+1}(x)=(\lambda_k
T^{*}(g_k(x))_{(q)})_{(p')}, \]
 where $\lambda_k>0$ is a constant
chosen so that $\|h_{k+1}\|_{p,I}=1$.

We denote $O_n(k)=\{h_k(.,h_0), h_0(.) \in O_n\}$. As in the proof
of Theorem \ref{Theorem 2.2} we have:

$\|g_k\|_{q,I}$ is a nondecreasing as $k\nearrow \infty$. For each
$k \in \mathbb{N}$ there exists $g_k \in O_n(k)$ with $n$ zeros
inside $I$; $\lim_{k \to \infty} g_k(.,g_0)$ is an eigenfunction
and there exists $g_0(.)$ such that $\lim_{k \to \infty}
g_k(.,g_0)$ is an eigenfunction with $n$ zeros. Moreover
$\lambda_k $ is monotonically decreasing as $k\nearrow \infty$.

Let $\overline{\alpha}\in \mathbb{R}^{n+1}$ be such that:
$\overline{g_0}(.)= \left( \sum_{i=1}^{n+1} \overline{\alpha_i}
f_i \right)$ is a function for which $\lim_{k \to \infty}
\overline{g_k}(.,g_0)$ is an eigenfunction with $n$ zeros.

Then we have the following contradiction:
\begin{align*}
\min_{\alpha \in \mathbb{R}^n \setminus \{0\}} {{\|T \left(
\sum_{i=1}^{n+1} \overline{\alpha_i} f_i \right) \|_{q,I}} \over
{\| \sum_{i=1}^{n+1} \overline{\alpha_i} f_i \|_{p,I}}} & \le
\|\overline{g_0}(.)
\|_{q,I} \\
\le \lim_{k \to \infty} \| {g_k}(.,\overline{g_0}(.)) \|_{q,I} &
\le \check{\lambda}^{-1/q},
\end{align*}

\end{proof}

In the next two sections we obtain an upper estimate for
Kolmogorov numbers and a lower estimate for Bernstein numbers. We
shall need the approximation numbers $a_n(T)$ of $T$, defined by
$a_n(T)=\inf \|T-F\|$, where the infimum is taken over all linear
operators $F$ with rank at most $n-1$.

\section{The case $q\le p$}

We recall Jensen's inequality (see, for example \cite{KJF}, p.133)
which will be of help in the next lemma.

\begin{theorem}
\label{Jenssen}If $F$ is a convex function, and $h(.) \ge 0$ is a
function such that $\int_I h(t) dt=1$, then for every non-negative
function $g$,
$$ F(\int_I h(t) g(t) dt) \le \int_I h(t) F(g(t)) dt. $$
\end{theorem}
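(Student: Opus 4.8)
The plan is to prove this by the standard supporting-line argument for convex functions. Write $c := \int_I h(t) g(t)\, dt$ for the weighted mean of $g$ against the density $h$. First I would dispose of the trivial case: if $\int_I h(t) F(g(t))\, dt = +\infty$ there is nothing to prove, so assume this integral is finite; since $h \ge 0$, $g \ge 0$ and $\int_I h(t)\,dt = 1$, one checks that $c$ is then a finite real number lying in the domain of $F$.

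The key input is the elementary fact that a convex function $F$ admits at the point $c$ a supporting line: there is a real number $m$ (any value between the left and right derivatives of $F$ at $c$) with
\[ F(y) \ \ge\ F(c) + m\,(y-c) \qquad \text{for all } y \text{ in the domain of } F . \]
This follows purely from the definition of convexity, via the monotonicity of the difference quotients $s \mapsto \big(F(c+s) - F(c)\big)/s$; it is the one place where convexity is genuinely used.

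Next I would substitute $y = g(t)$, multiply through by the nonnegative weight $h(t)$, and integrate over $I$. Since $F$ is convex it is continuous, so $F\circ g$ is measurable and the integrals are meaningful; this gives
\[ \int_I h(t) F(g(t))\, dt \ \ge\ \int_I h(t)\big[\, F(c) + m\big(g(t)-c\big)\,\big]\, dt . \]
Expanding the right-hand side and using $\int_I h(t)\, dt = 1$ together with $\int_I h(t) g(t)\, dt = c$, the terms involving $m$ cancel and the bound collapses to $F(c) = F\big(\int_I h(t) g(t)\, dt\big)$, which is exactly the assertion.

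The only genuinely delicate point is the existence of the supporting line — equivalently, the one-sided differentiability of convex functions, plus a little care about whether $c$ sits at an endpoint of the domain of $F$ — and this is precisely why the authors are content to cite \cite{KJF}; everything else is routine bookkeeping with a probability density $h$.
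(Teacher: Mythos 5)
Your supporting-line argument is the standard proof of Jensen's inequality and is correct; the only loose end is the untreated possibility that $c=\int_I h(t)g(t)\,dt=+\infty$ (finiteness of $\int_I h\,F(g)\,dt$ forces $c<\infty$ only when $F$ has at least linear growth, as it does in the paper's application), which you partly acknowledge. The paper itself gives no proof of this theorem — it is quoted from \cite{KJF}, p.~133 — so there is nothing in-paper to compare against; your argument is precisely the classical one such references use.
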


The following lemma give us a lower estimate for eigenvalues.

\begin{lemma}
  \label{an lambdan} If $n >1$ then
  $ a_n(T) \le \widehat{\lambda}^{-1/q} $, where
  $\widehat{\lambda}=  \max(sp_n(p,q)).$
\end{lemma}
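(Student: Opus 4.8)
The statement $a_n(T) \le \widehat{\lambda}^{-1/q}$ should follow by exhibiting, for a spectral triple $(g,f,\lambda) \in SP_n(T,p,q)$ with $\lambda = \widehat{\lambda} = \max sp_n(p,q)$, an explicit rank-$(n-1)$ operator $F$ such that $\|T - F\| \le \widehat{\lambda}^{-1/q} = \|g\|_q$. The natural candidate is built from the $n$ zeros of the eigenfunction $g$. Let $a = x_0 < x_1 < \cdots < x_n < x_{n+1} = b$ be the zeros of $g$ inside $I$ together with the endpoints. This partitions $I$ into $n+1$ subintervals $I_j = [x_{j-1}, x_j]$. On each $I_j$ define a local Hardy-type operator $T_j f(x) = v(x) \int_{x_{j-1}}^x f(t) u(t)\,dt$ for $x \in I_j$, and let $F$ be the operator that "glues" the restrictions $T_j$ with the correct boundary offsets: explicitly, $Ff(x) = v(x)\int_a^{x_{j-1}} f(t)u(t)\,dt$ (a constant-in-$x$ times $v(x)$) for $x \in I_j$. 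Then $F$ has rank at most $n$... so in fact I want $F$ to have rank $n-1$, which forces a slightly more careful choice: I would instead take $F$ to be the rank-$(n-1)$ operator that reproduces $T$ exactly on the span related to the first $n-1$ break points, so that $T - F$ acts block-diagonally on only two of the subintervals, or better, follow the standard device of absorbing one block.

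The cleaner route, and the one I would actually carry out: for each subinterval $I_j$ let $\alpha_j := \|T_j : L_p(I_j) \to L_q(I_j)\|$ be the norm of the localized Hardy operator. A standard gluing argument (as in \cite{EE1}, \cite{EE2}) gives $a_n(T) \le \max_{1\le j \le n+1} \alpha_j$ whenever the break points are $n$ interior points: one subtracts off the rank-$\le n$ piecewise-constant-times-$v$ operator carrying the "history" $\int_a^{x_{j-1}}$, and then $T-F$ is the direct sum $\bigoplus_{j=1}^{n+1} T_j$, whose norm is the max of the $\alpha_j$; a dimension count shows the rank is actually $n-1$ because the first block contributes no constant term. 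So it suffices to choose the break points to be the zeros $x_1,\dots,x_n$ of $g$ and to show that for each $j$, $\alpha_j \le \|g\|_q = \widehat{\lambda}^{-1/q}$. This is where the restricted eigenfunction does the work: on $I_j$, the function $f|_{I_j}$ and $g|_{I_j}$ satisfy the same nonlinear system (\ref{Eq 2.2})--(\ref{Eq 2.3}) localized to $I_j$ with $g$ vanishing at both endpoints $x_{j-1}, x_j$, hence $\lambda$ is a spectral number for $T_j$ with a spectral function having no interior zeros, so by the isoperimetric characterization (\ref{Eq 2.1}) — $\sup_{\|f\|_p \le 1}\|T_jf\|_q$ is attained by the ground-state spectral pair of $T_j$ — we get $\alpha_j = (\lambda_j^{(0)})^{-1/q}$ where $\lambda_j^{(0)}$ is the smallest spectral number of $T_j$, and $\lambda_j^{(0)} \ge \lambda = \widehat{\lambda}$ (a restriction of an $n$-zero eigenfunction restricted to a nodal interval is the ground state there, and maximality of $\widehat{\lambda}$ over $sp_n$ plus monotonicity under domain restriction gives the inequality). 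Hence $\alpha_j \le \widehat{\lambda}^{-1/q}$, and taking the max over $j$ finishes the bound.

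Concretely the steps are: (1) fix $(g,f,\widehat{\lambda}) \in SP_n(T,p,q)$ and its nodal partition; (2) construct the rank-$(n-1)$ operator $F$ so that $T-F$ is block-diagonal over the nodal intervals, verifying the rank count; (3) identify $\|T - F\| = \max_j \alpha_j$ with $\alpha_j$ the norm of the localized Hardy operator $T_j$; (4) show $\alpha_j = \|g|_{I_j}\|_q / \|f|_{I_j}\|_p \le \|g\|_q$, i.e. that on each nodal interval $g$ realizes the isoperimetric supremum for $T_j$, using (\ref{Eq 2.1}) and the fact that $g$ has constant sign on $I_j$ and vanishes at its endpoints; (5) conclude $a_n(T) \le \|g\|_q = \widehat{\lambda}^{-1/q}$.

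The main obstacle is step (4): I must argue that the restriction $g|_{I_j}$ is genuinely the extremal (ground-state) function for the localized problem on $I_j$, not merely some spectral function for $T_j$. The point is that $g|_{I_j}$ has no sign change in the interior of $I_j$, so it lies in $SP_0(T_j,p,q)$, and one needs that the smallest spectral number controls the operator norm — i.e. $\sup_{\|h\|_{p,I_j}\le1}\|T_j h\|_{q,I_j} = (\min sp(T_j,p,q))^{-1/q}$ — which is precisely the isoperimetric statement (\ref{Eq 2.1}) applied on $I_j$ together with the relation $\lambda^{-1/q} = \|g\|_q$ derived earlier. A secondary technical point is getting the rank of $F$ to be exactly $n-1$ rather than $n$; this is handled by noting the leftmost block $I_1$ has $\int_a^{x_0} = 0$, so it contributes nothing to $F$, leaving $n$ nontrivial break-point contributions but only $n-1$ linearly independent ones after accounting for the structure — if this is delicate I would instead merge two adjacent nodal intervals into one block, which is harmless for the norm bound since the localized operator over a union of two nodal intervals still has norm $\le \widehat{\lambda}^{-1/q}$ by the same comparison, provided one checks the two-interval piece is still controlled (it is, since adding a sign change only decreases the relevant ground-state-type quantity — or one simply uses that $sp_0$ of the larger interval has smallest element $\ge$ that of a subinterval is false in general, so the cleanest fix is the leftmost-block observation).
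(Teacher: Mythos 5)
Your plan has a fatal problem at step (3), precisely in the regime where this lemma is needed. You claim that after subtracting $F$, the operator $T-F$ is block-diagonal over the nodal intervals $I_j$ of $g$ and therefore $\|T-F\| = \max_j\alpha_j$. But a block-diagonal operator from $L_p$ to $L_q$ has norm $\max_j\alpha_j$ only when $q\ge p$. For $q<p$ (the case of Section 2, where Lemma~\ref{an lambdan} is used) one has $\sum_j\|f|_{I_j}\|_{p,I_j}^q \ge \bigl(\sum_j\|f|_{I_j}\|_{p,I_j}^p\bigr)^{q/p}=\|f\|_{p}^q$, so the estimate $\|(T-F)f\|_q^q\le\max_j\alpha_j^q\sum_j\|f|_{I_j}\|_p^q$ does not reduce to $\max_j\alpha_j^q\|f\|_p^q$; the norm of the block-diagonal operator involves an $\ell_s$-combination of the $\alpha_j$ with $1/s=1/q-1/p$, not their maximum. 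So the decomposition by nodal intervals of $g$ plus ``max of local norms'' cannot give the bound.

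Step (4) has an independent problem. You invoke the isoperimetric identity $\alpha_j=\|T_j\|=(\min sp_0(T_j,p,q))^{-1/q}$ and then want $\min sp_0(T_j)\ge\widehat{\lambda}$. But the restriction of $(\widehat{g},\widehat{f},\widehat{\lambda})$ to a nodal interval $I_j$ of $\widehat{g}$ is itself a spectral triple for $T_j$ with no interior zeros, so $\widehat{\lambda}\in sp_0(T_j,p,q)$ and hence $\min sp_0(T_j)\le\widehat{\lambda}$, i.e. $\alpha_j\ge\widehat{\lambda}^{-1/q}$ — the opposite of what you need. Turning this into $\alpha_j\le\widehat{\lambda}^{-1/q}$ would require showing that $\widehat{\lambda}$ is exactly the ground-state value for every $T_j$, which is far from clear when $p\ne q$ (where $sp_0$ may not be a singleton) and is not what ``maximality of $\widehat\lambda$ over $sp_n$'' gives.

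The paper avoids both pitfalls by taking a different route: the intervals $I_i$ of the splitting are determined by the zeros of $\widehat{f}$ (not of $\widehat{g}$), while the integration base points inside each block are the zeros $a_i$ of $\widehat{g}$ (which, by the Rolle interlacing, lie interior to the $I_i$); the norm $\sup_{\|f\|_p\le1}\|Tf-T_nf\|_q$ is then estimated globally — not as a max of block norms — via a two-step application of Jensen's inequality, using the kernel $K(t,\tau)$ and the eigenfunction identity $\widehat{\lambda} T^*(\widehat{g}_{(q)})=\widehat{f}_{(p)}$ to collapse the expression to $\widehat{\lambda}^{-1/q}\|\bar f\|_p$. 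If you want to salvage a ``local'' argument in the spirit of your proposal, you would need a mechanism that compensates for the wrong direction of the $\ell^q$/$\ell^p$ inequality when $q<p$; the Jensen step in the paper is exactly such a mechanism, trading $|\cdot|^q$ for $|\cdot|^p$ against the probability weight $s(t)=\widehat{\lambda}^q|\widehat{g}(t)|^q$.
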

\begin{proof}
For the sake of simplicity we suppose that $|I|=1.$

Let $(\widehat{g},\widehat{f},\widehat{\lambda}) \in SP_n(T,p,q)$.
Denote by $\{a_i\}_{i=0}^n$ the set of zeros of $\widehat{g}$
(with $a_0=a$) and by $\{b_i\}_{i=1}^{n+1}$ (with $b_{n+1}=b$) the
set of zeros of $\widehat{f}$. Set $I_i=(b_i,b_{i+1})$ for
$i=1,...,n$ and $I_0=(a_0,b_1)$, and define
$$T_nf(x):= \sum_{i=0}^n \chi_{I_i}(.) v(.) \int_a^{a_i}
u(t) f(t) dt. $$
 Then the $\rank $ of $T_n$ is at most  $n$.

We have (see \cite[Chapter 2]{EE1}) $d_n(T)\le a_n(T) \le
\sup_{\|f\|_p \le 1} \|Tf-T_nf\|_q.$

Let us consider the extremal problem:
\begin{equation}
\sup_{\|f\|_p \le 1} \|Tf-T_nf\|_q.
\label{extrem1}%
\end{equation}
We can see that this problem is equivalent to
\begin{equation}
\sup \{ \|Tf\|_q: \|f\|_p \le 1, (Tf)(a_i)=0 \mbox{ for } i=0
\dots n \}
\label{extrem2}%
\end{equation}

 Since $T$ and $T_n$ are compact then there is a
solution of this problem, that is, the supremum is attained. Let
$\bar{f}$ be one such solution and denote $\bar{g}=T \bar{f}$. We
can choose $\bar{f}$ such that $\bar{g}(t) \widehat{g}(t) \ge 0,$
for all $t\in I$. We have $\| \bar{g} \|_{q,I} \ge  \|
\widehat{g}\|_{q,I}$

Note that for any $f \in L^p(I)$ such that $Tf(a_i)=0$ for every
$i=0,...,n$ we have $Tf(x)=T^+f(x)$ for each $x\in I$, where
\[T^+f(x):=\int_I K(x,t) f(t) dt = \sum_{i=0}^n \chi_{I_i}(.) v(.) \int_a^{x}
u(t) f(t) dt
\]
and
\[  K(x,t):=\sum_{i=0}^n \chi_{I_i}(x) v(x)u(t)
\chi_{(a_i,x)} \sign(x-a_i).\]

Set $s(t)=|\hat{g}(t)|^q \hat{\lambda}^q,$ where $\hat{\lambda}=
\|\hat{g}\|_{q,I}$. Then, all integrals being over $I$, we have

\begin{align*}
 \left( \int  |\bar{g}(t)|^{q}  dt \right)^{1/q}  = &
\hat{\lambda}^{-1/q}\left( \int s(t) \left|{\bar{g}(t) \over
\hat{g}(t)}\right|^{q} dt \right)^{1/q}
\\
& \qquad \mbox{(use Jensen's inequality, noting that} \int s(t)
dt=1
 ) \\
 \le &
\hat{\lambda}^{-1/q}\left( \int s(t) \left|{\bar{g}(t)
\over \hat{g}(t)}\right|^{p} dt \right)^{1/p}\\
= & \hat{\lambda}^{-1/q}\left( \int s(t) \left|{T^+\bar{f}(t)
\over \hat{g}(t)}\right|^{p} dt \right)^{1/p}\\
= & \hat{\lambda}^{-1/q}\left( \int s(t) \left|{\int
K(t,\tau)\bar{f}(\tau) d\tau \over \hat{g}(t)}\right|^{p} dt
\right)^{1/p} \\
 = & \hat{\lambda}^{-1/q}\left(
\int s(t) \left|\int {K(t,\tau)\hat{f}(\tau) \over \hat{g}(t)}
{\bar{f}(\tau) \over \hat{f}(\tau)} d\tau \right|^{p}
dt \right)^{1/p} \\
& \qquad \mbox{ (use Jensen's inequality, noting that}\\
& \left. {K(t,\tau)\hat{f}(\tau) \over \hat{g}(t)} \ge 0 \mbox{
and }
\int {K(t,\tau)\hat{f}(\tau) \over \hat{g}(t)}d\tau =1 \right)\\
 \le & \hat{\lambda}^{-1/q}\left( \int s(t) \int
{K(t,\tau)\hat{f}(\tau) \over \hat{g}(t)} \left| {\bar{f}(\tau)
\over \hat{f}(\tau)}\right|^{p} d\tau dt \right)^{1/p} \\
\end{align*}

\begin{align*}
= & \hat{\lambda}^{-1/q}\left( \int \left| {\bar{f}(\tau) \over
\hat{f}(\tau)}\right|^{p}  \bar{f}(\tau) \int
{K(t,\tau)s(t) \over \hat{g}(t)} dt d\tau \right)^{1/p} \\
= & \hat{\lambda}^{-1/q}\left( \int \left| {\bar{f}(\tau) \over
\hat{f}(\tau)}\right|^{p}  \bar{f}(\tau) \int
{K(t,\tau)|\hat{g}(t)|^q  \over \hat{g}(t)}\hat{\lambda} dt
d\tau \right)^{1/p} \\
= & \hat{\lambda}^{-1/q}\left( \int \left| {\bar{f}(\tau) \over
\hat{f}(\tau)}\right|^{p}  \bar{f}(\tau) \int {K(t,\tau)
\hat{g}_{(q)}(t) } dt \hat{\lambda} d\tau
\right)^{1/p} \\
& \qquad \left( \mbox{ use } \int {K(t,\tau) \hat{g}_{(q)}(t) } dt
\hat{\lambda}^q=  \hat{\lambda}
T^*(\hat{g}_{(q)})(t)=\hat{f}_{(p)}(t) \right) \\
= & \hat{\lambda}^{-1/q}\left( \int \left| {\bar{f}(\tau) \over
\hat{f}(\tau)}\right|^{p}  \hat{f}(\tau)
\hat{f}_{(p)}(\tau) d\tau\right)^{1/p}\\
& \qquad ( \mbox{ use }  \hat{f}(t)
\hat{f}_{(p)}(t) = | \hat{f}(t)|^p ) \\
 = & \hat{\lambda}^{-1/q}\left( \int \left|
\bar{f}(\tau) \right|^{p} d\tau \right)^{1/p}=
\hat{\lambda}^{-1/q}.
\end{align*}

From this it follows that $a_n(T) \le \hat{\lambda}^{-1/q}$.
\end{proof}

\begin{theorem}
  \label{q<p} If $1< q\le p < \infty$, then
  \[ \lim_{n \to \infty} n {\hat{\lambda}_n}^{-1/q}
  =c_{pq}\left(\int_I |uv|^{1/r} dt \right)^r\]
  where $r=1/p'+1/q$, ${\hat{\lambda}_n}=
   \max(sp_n(p,q))$ and
\begin{equation}
 c_{pq}= {(p')^{1/q} q^{1/p'} (p'+q)^{1/p-1/q} \over 2
   B(1/q,1/p')} \label{constant}
\end{equation}
   ($B$  denotes the Beta
   function).
\end{theorem}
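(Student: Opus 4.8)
We may assume $q<p$, the self--dual case $q=p$ (where $sp_n(T,p,q)$ is a singleton) being the classical one treated in \cite{EE2}. The plan is to squeeze $\hat{\lambda}_n^{-1/q}$ between the Bernstein and Kolmogorov numbers of $T$ and then to determine the common asymptotics of these widths through a local analysis on a suitably balanced partition of $I$; the constant $c_{pq}$ will come out of an extremal problem governed by the $p,q$--Laplacian.

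For the squeezing I first note that $n\mapsto\hat{\lambda}_n$ is nondecreasing: if some $(g_2,f_2,\lambda_2)\in SP_m(T,p,q)$ with $m<n$ had $\lambda_2>\hat{\lambda}_n$, comparing $Tf_2$ with an eigenfunction $Tf_1$ of a triple realising $\hat{\lambda}_n$ through inequality~(\ref{3 BT}) of Lemma~\ref{corollary 3 BT} forces a contradiction in the zero counts, in the usual Sturmian way. Hence $\max\{\lambda\in\cup_{i\le n}sp_i(p,q)\}=\hat{\lambda}_n$, so Lemma~\ref{dn lambdan} gives $\hat{\lambda}_n^{-1/q}\le d_n(T)$, while Lemma~\ref{an lambdan} gives $a_n(T)\le\hat{\lambda}_n^{-1/q}$. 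Since a map of rank $\le n-1$ vanishes on a two--dimensional subspace of any $(n+1)$--dimensional subspace of $L_p(I)$, one has $b_n(T)\le a_n(T)$; altogether
\[
b_n(T)\ \le\ a_n(T)\ \le\ \hat{\lambda}_n^{-1/q}\ \le\ d_n(T),
\]
so the theorem follows from the two width estimates $\limsup_n n\,d_n(T)\le c_{pq}\bigl(\int_I(uv)^{1/r}dt\bigr)^{r}$ and $\liminf_n n\,b_n(T)\ge c_{pq}\bigl(\int_I(uv)^{1/r}dt\bigr)^{r}$ (and then also $n\,a_n(T)=n\,d_n(T)=n\hat{\lambda}_n^{-1/q}\to c_{pq}\bigl(\int_I(uv)^{1/r}dt\bigr)^{r}$).

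Both rest on the local quantity $\mu(J):=\sup\{\|Tf\|_{q,J}:\|f\|_{p,J}\le1,\ (Tf)(\alpha)=0\}$ for a subinterval $J=[\alpha,\beta]\subseteq I$. After freezing $u,v$ at constants on $J$, the Euler--Lagrange equation of this problem is the $p,q$--Laplacian~(\ref{Eq 2.4})--(\ref{Eq 2.5}), solved by generalized trigonometric functions; a scaling computation on $[0,1]$ then yields $\mu(J)=(c_{pq}+o(1))\bigl(\int_J(uv)^{1/r}dt\bigr)^{r}$ as $|J|\to0$ when $u,v$ vary little on $J$, with the Beta function $B(1/q,1/p')$ in~(\ref{constant}) entering via the elementary integral $\int_0^1(1-t^{q})^{-1/p}dt=\tfrac1q B(1/q,1/p')$. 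For general $u\in L_{p'}$, $v\in L_q$ I would first reduce, by absolute continuity of the integral, to $u,v$ continuous, hence nearly constant on each piece of a fine partition, so that this $o(1)$ is uniform. Now, given $n$, split $I$ into consecutive $J_1,\dots,J_n$ with $\int_{J_i}(uv)^{1/r}dt=\tfrac1n\int_I(uv)^{1/r}dt$. For the Kolmogorov bound take $T_n$ to replace $T$ on each $J_i$ by its rank--one restriction (the operator $T_n$ of Lemma~\ref{an lambdan}); the error $\|(T-T_n)f\|_q$ is controlled piece by piece by $\mu(J_i)$, and maximising $\sum_i\delta_i^{q/p}\mu(J_i)^q$ over $\delta_i\ge0$ with $\sum_i\delta_i=1$ gives $d_n(T)\le a_n(T)\le\bigl(\sum_i\mu(J_i)^{\theta}\bigr)^{1/\theta}$, where $\theta:=1/(1/q-1/p)>0$. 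Inserting the local estimate and the balanced partition, and using $1/\theta-r=-1$ together with $r\theta>1$ (which makes the balanced partition optimal), one obtains $\limsup_n n\,d_n(T)\le c_{pq}\bigl(\int_I(uv)^{1/r}dt\bigr)^{r}$.

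For the matching lower estimate on $b_n(T)$ I would use, on a balanced partition into $n+1$ pieces, the subspace $X_{n+1}=\spann\{f_1,\dots,f_{n+1}\}$ with $f_i$ the local extremal function on $J_i$; the main obstacle is that $T$ is a Volterra operator, so $Tf_i$ has a tail to the right of $J_i$, and one must show that for no nonzero $\sum\alpha_if_i$ can all these tails be small relative to $\|\sum\alpha_if_i\|_p$, which --- after the same $\ell_\theta$ bookkeeping on the balanced partition --- yields $\|T(\sum\alpha_if_i)\|_q/\|\sum\alpha_if_i\|_p\ge(c_{pq}-o(1))n^{-1}\bigl(\int_I(uv)^{1/r}dt\bigr)^{r}$. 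Alternatively one may construct a family of spectral triples in $SP_n(T,p,q)$ whose spectral numbers approach the claimed value, gluing local principal eigenfunctions with matched $\lambda$ across the zeros --- the total positivity of the kernel, cf.\ \cite{N1}, being what makes the glued object a genuine spectral function with $n$ interior zeros --- or simply quote the asymptotics of $a_n(T)$ for $q\le p$ from \cite{EL1}, \cite{EL2} once $a_n(T)=\hat{\lambda}_n^{-1/q}$ is known. The two points I expect to cost the most effort are (i) the sharp solution of the local $p,q$--Laplacian problem and the uniform quantitative control of $\mu(J)-c_{pq}(\int_J(uv)^{1/r})^{r}$ for merely integrable $u,v$, and (ii) in the Bernstein step, the interaction of the local pieces through the nonlocality of $T$; note that $q\le p$ is used exactly in securing $\theta>0$ and $r\theta>1$, which is why it is the $\hat{\lambda}_n$--asymptotics rather than the $\check{\lambda}_n$--one that comes out in this range.
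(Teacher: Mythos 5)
Your squeeze $a_n(T)\le\hat{\lambda}_n^{-1/q}\le d_n(T)$ via Lemmas \ref{an lambdan} and \ref{dn lambdan} is exactly the paper's argument, and your remark that one must first check $\max\{\lambda\in\cup_{i\le n}sp_i(p,q)\}=\hat{\lambda}_n$ before Lemma \ref{dn lambdan} can be applied is a point the paper passes over in silence. The paper then closes both ends of the squeeze in two lines by quoting from \cite{EL2} that $\lim_n n\,a_n(T)=\lim_n n\,d_n(T)=c_{pq}\bigl(\int_I|uv|^{1/r}dt\bigr)^r$ for $q\le p$; your fallback option of ``simply quoting the asymptotics of $a_n$'' is precisely this, and your sketch of the local $p,q$-Laplacian analysis with the balanced partition and the $(\sum_i\mu(J_i)^{\theta})^{1/\theta}$ bookkeeping is the standard route to those width asymptotics.

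The genuine gap is in your primary route for the lower half, namely $\liminf_n n\,b_n(T)\ge c_{pq}(\ldots)$. That estimate is the content of \cite{EL1} and holds for $p\le q$ (the paper uses it only in Theorem \ref{p<q}); for $q<p$ it fails, and your construction cannot deliver it. On a balanced partition into $n+1$ pieces with disjointly supported local extremizers $f_i$, normalised so that $\|f_i\|_p=1$ and $\|Tf_i\|_q\approx\mu:=c_{pq}\bigl(\tfrac{1}{n+1}\int_I(uv)^{1/r}dt\bigr)^{r}$, one has (tails aside)
\[
\inf_{\alpha\ne0}\frac{\|T(\sum_i\alpha_if_i)\|_{q}}{\|\sum_i\alpha_if_i\|_{p}}\approx\mu\cdot\inf_{\alpha\ne0}\frac{\|\alpha\|_{\ell_q}}{\|\alpha\|_{\ell_p}}=\mu,
\]
since for $q<p$ the infimum of $\|\alpha\|_{\ell_q}/\|\alpha\|_{\ell_p}$ equals $1$ and is attained on a single coordinate. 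The subspace therefore only yields $b_n(T)\gtrsim n^{-r}$ with $r=1/p'+1/q>1$, so $n\,b_n(T)\to0$ along this construction; no $\ell_\theta$ bookkeeping recovers the lost factor $n^{r-1}$. The obstruction is not the Volterra tails you single out but the concentration of the infimum on one block, which is exactly where the dichotomy $q<p$ versus $p<q$ bites and why the paper reaches $\hat{\lambda}_n$ through $a_n$ and $d_n$ but $\check{\lambda}_n$ through $b_n$. Replace the Bernstein step by the $a_n$ route: Lemma \ref{an lambdan} together with $\liminf_n n\,a_n(T)\ge\lim_n n\,d_n(T)=c_{pq}\bigl(\int_I|uv|^{1/r}dt\bigr)^r$ from \cite{EL2} gives the required $\liminf$ bound, and the rest of your argument stands.
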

\begin{proof}
From \cite{EL2} we have
\[\lim_{n \to \infty} n
{a_n(T)}= \lim_{n \to \infty} n {d_n(T)}
  =c_{pq}\left(\int_I |uv|^{1/r} dt \right)^r \] and since $d_n(T) \le
  a_n(T)$, $a_n(T)\searrow 0$ and $d_n(T)\searrow 0$
then from Lemma \ref{an lambdan} follows:
\[c_{pq}\left(\int_I |uv|^{1/r} dt
\right)^r \le \liminf_{n \to \infty} n {\hat{\lambda}_n}^{-1/q},
\]
 and from Lemma \ref{dn
lambdan} we have
\[ \limsup_{n \to \infty} n {\hat{\lambda}_n}^{-1/q} \le c_{pq} \left(\int_I |uv|^{1/r} dt
\right)^r  \] which finishes the proof.
\end{proof}

\section{The case $p\le q$}

\begin{lemma}
 \label{bn>lambdan}Let $1 < p \le q < \infty$ and $n >1$. Then
  $ b_n(T) \ge \check{\lambda}^{-1/q}, $ where
  $\check{\lambda} =
\min (sp_n(p,q)).$
\end{lemma}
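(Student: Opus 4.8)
The plan is to establish the lower bound $b_n(T)\ge\check\lambda^{-1/q}$ by exhibiting an explicit $(n+1)$-dimensional subspace of $T(L_p(I))$ on which the ratio $\|Tf\|_{q,I}/\|f\|_{p,I}$ is bounded below by $\check\lambda^{-1/q}$. The natural candidate is the span of translates/pieces of the spectral function: take $(\check g,\check f,\check\lambda)\in SP_n(T,p,q)$ realizing $\check\lambda=\min(sp_n(p,q))$, let $b_1<\dots<b_n$ be the interior zeros of $\check f$ (with $b_0=a$, $b_{n+1}=b$), and set $f_i=\check f\cdot\chi_{(b_{i-1},b_i)}$ for $i=1,\dots,n+1$. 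Because $\check g=T\check f$ vanishes at the zeros $a_0,\dots,a_n$ of $\check g$ and each $f_i$ is supported on one subinterval, the functions $g_i:=Tf_i$ have disjoint "humps" and are linearly independent; put $X_{n+1}=\spann\{f_1,\dots,f_{n+1}\}$.

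First I would compute, for an arbitrary $f=\sum_{i=1}^{n+1}\alpha_i f_i\in X_{n+1}$, both $\|f\|_{p,I}^p=\sum_i|\alpha_i|^p\|f_i\|_{p,I}^p$ (the supports are disjoint) and $\|Tf\|_{q,I}^q$. The subtlety is that $Tf$ is \emph{not} supported disjointly — $(Tf)(x)=v(x)\int_a^x fu$ accumulates the earlier humps — so I would split $I$ at the points $a_0,\dots,a_n$ where $\check g$ vanishes and use that on the relevant pieces the sign pattern of $\check f$ makes the partial integrals add coherently; more precisely, on each interval the value of $T(\sum\alpha_if_i)$ is a fixed linear combination of the "building blocks" $T f_i$, and one uses $\|Tf\|_{q,I}\ge\big(\sum_i|\alpha_i|^q\|g_i\|_{q,I}^q\big)^{1/q}$ after arranging signs, or directly estimates block-by-block. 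Then the key algebraic point is the pointwise/blockwise relation coming from the spectral equation $(\check f)_{(p)}=\check\lambda\,T^*((\check g)_{(q)})$: integrating $\check f\cdot(\check f)_{(p)}$ against each block and using $\int|\check g|^q=\check\lambda^{-1}\int|\check f|^p$ on each subinterval (the local version of the identity derived in the introduction) yields $\|g_i\|_{q,I}^q=\check\lambda^{-1}\|f_i\|_{p,I}^p$ for each $i$, i.e. $\|g_i\|_{q,I}/\|f_i\|_{p,I}=\check\lambda^{-1/q}$ with a common constant.

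From there, combining the blockwise identity with the elementary inequality relating $\ell^p$ and $\ell^q$ norms of the coefficient vector (here $p\le q$ is exactly what makes it go the right way: $\big(\sum|\alpha_i|^q c^q\big)^{1/q}\big/\big(\sum|\alpha_i|^p\|f_i\|_p^p\big)^{1/p}\ge\check\lambda^{-1/q}$ after normalizing the $\|f_i\|_p$), I would conclude
\[
\frac{\|T(\sum\alpha_if_i)\|_{q,I}}{\|\sum\alpha_if_i\|_{p,I}}\ge\check\lambda^{-1/q}\quad\text{for all }\alpha\ne0,
\]
and taking the infimum over $\alpha$ and then the supremum over subspaces in the definition of $b_n(T)$ gives $b_n(T)\ge\check\lambda^{-1/q}$. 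The main obstacle is the middle step: controlling $\|Tf\|_{q,I}$ from below for combinations whose image under $T$ is \emph{not} a disjoint sum, so that one genuinely needs the sign structure of $\check f$ and the spectral equation to recover a clean blockwise identity rather than just an inequality; handling the cross terms (or showing they help rather than hurt) is where care is required. A secondary technical point is verifying $p\le q$ is used correctly in the $\ell^p$–$\ell^q$ comparison, which is the mirror image of how $q\le p$ was used in Lemma \ref{an lambdan}.
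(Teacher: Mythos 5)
Your overall strategy (restrict $\check f$ to the subintervals between consecutive zeros, get a decoupled extremal problem, compare block ratios) is the right starting point, and it is the paper's starting point too, but there are two problems, one fixable and one fatal to the argument as written. The fixable one: you cut at the zeros of $\check f$, which leaves the images $Tf_i$ with nonzero ``tails'' to the right of their supports, and you then have to fight cross terms. The paper cuts instead at the zeros $x_0<\dots<x_{n+1}$ of $\check g$: since $\check g(x_i)=0$ forces $\int_a^{x_i}\check f u\,dt=0$, each piece $f_i=\check f\chi_{(x_{i-1},x_i)}$ satisfies $\int f_i u=0$, so $Tf_i=\check g\chi_{(x_{i-1},x_i)}$ exactly and the images are genuinely disjointly supported. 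With that choice the decoupling you want is automatic and no cross terms arise.

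The fatal gap is your ``key algebraic point'': the claimed blockwise identity $\|g_i\|_{q}^q=\check\lambda^{-1}\|f_i\|_{p}^p$. The global identity $\|\check g\|_q^q=\check\lambda^{-1}\|\check f\|_p^p$ comes from pairing $\check g$ with $(\check g)_{(q)}$ and using $(\check f)_{(p)}=\check\lambda\,T^*((\check g)_{(q)})$, where $T^*h(x)=u(x)\int_x^b vh\,dy$ integrates all the way to $b$. Localizing to $I_i=(x_{i-1},x_i)$ via Fubini produces $\int_t^{x_i}v(\check g)_{(q)}\,dy$ rather than $\int_t^{b}v(\check g)_{(q)}\,dy$, and the discrepancy $\int_{x_i}^b v(\check g)_{(q)}\,dy$ vanishes only if $\check f(x_i)=0$; but the zeros of $\check f$ and of $\check g$ interlace and do not coincide, so the local identity fails in general. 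Without it, the block ratios $c_i=\|g_i\|_q/\|f_i\|_p$ need not all equal (or even exceed) $\check\lambda^{-1/q}$ by any pointwise computation, and since for disjoint supports and $p\le q$ the infimum over $\alpha$ is controlled by $\min_i c_i$, your power-mean step has nothing to stand on. This is precisely why the paper does not argue blockwise: it treats $\inf_\alpha\|\sum\alpha_i g_i\|_q/\|\sum\alpha_i f_i\|_p$ as a genuine extremal problem, shows a minimizer $\bar\alpha$ has all entries nonzero and alternating in sign, compares it with the alternating vector $\beta=(1,-1,\dots)$ (for which the ratio is exactly $\check\lambda^{-1}$ because it reassembles $\pm\check g$ and $\pm\check f$), and uses the sign-change counting Lemmas \ref{lemma 3.6 BT} and \ref{corollary 3 BT} to derive a contradiction from the assumption that the minimum is strictly smaller. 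Some form of this oscillation argument is needed; the elementary $\ell^p$--$\ell^q$ comparison alone cannot close the proof.
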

\begin{proof} We use the construction of Buslaev \cite{B1}
Take $(\check{g},\check{f},\check{{\lambda}})$ from $SP_n(T,p,q)$
and denote by $a=x_0<x_1< ...<x_i< ... <x_n<x_{n+1}=b$ the zeros
of $\check{g}$. Set $I_i=(x_{i-1},x_i)$ for $1 \le i \le n+1$,
 $f_i(.)=\check{f}(.)\chi_{I_i}(.)$ and $g_i(.)=\check{g}(.)\chi_{I_i}(.)$.
 Then
$Tf_i=g_i(.)$ for $1\le i \le n+1$.

Define $X_{n+1}=\spann\{f_1, ... f_{n+1}\}$. Since the supports of
$\{f_i\}$ and $\{g_i\}$ are disjoint, then we have
\[b_n(T)\ge  \inf_{\alpha \in \mathbb{R}^n \setminus \{0\}}
{{\|T \left( \sum_{i=1}^{n+1} \alpha_i f_i \right) \|_{q,I}} \over
{\| \sum_{i=1}^{n+1} \alpha_i f_i \|_{p,I}}}=
 \inf_{\alpha \in
\mathbb{R}^n \setminus \{0\}} {{\| \sum_{i=1}^{n+1} \alpha_i g_i
\|_{q,I}} \over {\| \sum_{i=1}^{n+1} \alpha_i f_i \|_{p,I}}} .
\]
We shall study the extremal problem of finding
\[\inf_{\alpha \in
\mathbb{R}^n \setminus \{0\}} {{\| \sum_{i=1}^{n+1} \alpha_i g_i
\|_{q,I}} \over {\| \sum_{i=1}^{n+1} \alpha_i f_i \|_{p,I}}} .
\]
It is obvious that the extremal problem has a solution. Denote
that solution by $\bar{\alpha}=(\bar{\alpha}_1, \bar{\alpha}_2,
... )$. Since $p\le q$, a short computation shows us that
$\bar{\alpha}_i \not = 0$ for every $i$, moreover we can suppose
that  the $\bar{\alpha}_i$ alternate in sign. Label
\[\bar{\gamma}: =  {{\| \sum_{i=1}^{n+1} \bar{\alpha}_i g_i \|^q_{q,I}} \over {\|
\sum_{i=1}^{n+1} \bar{\alpha}_i f_i \|^p_{p,I}}};
\]
then the solution of the extremal problem is given by
$\bar{g}=\sum_{i=1}^{n+1} \bar{\alpha}_i g_i$,
$\bar{f}=\sum_{i=1}^{n+1} \bar{\alpha}_i f_i$ where
$\|\bar{f}\|_p=1$.

 Let us take the vector $\beta=(1,-1,...
)$. Define the functions $\tilde{g}=\sum_{i=1}^{n+1} \beta_i g_i$,
$\tilde{f}=\sum_{i=1}^{n+1} \beta_i f_i$. Then
\[\lambda_n^{-1}: =  {{\| \sum_{i=1}^{n+1} {\beta}_i g_i \|^q_{q,I}} \over {\|
\sum_{i=1}^{n+1} {\beta}_i f_i \|^p_{p,I}}}.
\]
 It is obvious that
$\bar{\gamma} \le \lambda_n^{-1}$. Suppose that $\bar{\gamma} <
\lambda^{-1}$.

Since $\bar{\alpha}_i \not = 0$, $|\beta_i|=1$ and $\bar{\gamma} <
\lambda^{-1}$ then $0<\varepsilon^*:=\min_{1\le i \le
n+1}(\beta_i/\bar{\alpha}_i)<1$. From Lemma \ref{corollary 3 BT}
follows
\[P(T (\tilde{f}) - \varepsilon^* T (\bar{f}))\le
P(T (\tilde{f}) - {\varepsilon^*}^{(p-1)/(q-1)} (\bar{\gamma} /
\lambda_n^{-1})^{1/(q-1)} T (\bar{f})).
\]
By repeated use of Lemma \ref{corollary 3 BT} with the help of
$(\varepsilon^*)^{(p-1)/(q-1)}\le \varepsilon^* <1$ and
$\bar{\gamma} / \lambda^{-1} <1$ we  get
\[P(T (\tilde{f}) - \varepsilon^* T (\bar{f}))\le P(T (\tilde{f}))
=n. \]

On the other hand we have from Lemma \ref{lemma 3.6 BT} and the
definition of $\varepsilon^* $ that
\[ P(T (\tilde{f}) - \varepsilon^* T (\bar{f}))\le
P(\tilde{f} - \varepsilon^* \bar{f}) = P(\sum_{i=1}^{n+1} \beta_i
f_i-
 \varepsilon^*
\sum_{i=1}^{n+1} \bar{\alpha}_i f_i ) \le n-1,
\] which contradicts $\bar{\gamma} < \lambda^{-1}$.
\end{proof}

\begin{theorem}
  \label{p<q} If $1< p\le q < \infty$ then
  \[ \lim_{n \to \infty} n {\check{\lambda}_n}^{-1/q}
  =c_{pq}\left(\int_I |uv|^{r} dt \right)^{1/r}\]
  where $r=1/p'+1/q$, ${\check{\lambda}_n}=
   \min (sp_n(p,q))$ and $c_{pq}$ as in (\ref{constant}).
\end{theorem}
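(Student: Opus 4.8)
The plan is to mirror the structure of the proof of Theorem \ref{q<p}, exploiting the duality between the $p\le q$ and $q\le p$ situations together with the Bernstein-width estimates already prepared. First I would invoke the asymptotic result from \cite{EL2} (or, better, reduce to the already-proven Theorem \ref{q<p} by passing to the dual operator $T^{*}$, using that $T^{*}:L_{q'}(I)\to L_{p'}(I)$ is again a Hardy-type operator with weights $v,u$ interchanged, that $q'\le p'$ when $p\le q$, and that the Bernstein numbers of $T$ equal the approximation/Kolmogorov numbers of $T^{*}$ up to the standard duality $b_n(T)=d_n(T^{*})=a_n(T^{*})$ valid for such operators — see \cite[Chapter 2]{EE1}). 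This yields
\[
\lim_{n\to\infty} n\, b_n(T) = c_{pq}\left(\int_I (uv)^{r}\,dt\right)^{1/r},
\]
with the same constant $c_{pq}$ from (\ref{constant}) and the same exponent $r=1/p'+1/q$, because $r$ is symmetric under $(p,q)\mapsto(q',p')$ and $c_{pq}$ transforms correctly (this last point is a short check on the Beta-function identity $B(1/q,1/p')=B(1/p',1/q)$ and the symmetry of the remaining factors).

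Next I would combine this with the two-sided comparison between $b_n(T)$ and $\check{\lambda}_n^{-1/q}$. From Lemma \ref{bn<lambdan} we have $b_n(T)\le \check{\lambda}_n^{-1/q}$, hence
\[
c_{pq}\left(\int_I (uv)^{r}\,dt\right)^{1/r}=\lim_{n\to\infty} n\,b_n(T)\le \liminf_{n\to\infty} n\,\check{\lambda}_n^{-1/q}.
\]
For the reverse inequality I would use Lemma \ref{bn>lambdan}, which gives $b_n(T)\ge \check{\lambda}_n^{-1/q}$ in precisely the regime $1<p\le q<\infty$ under consideration, whence
\[
\limsup_{n\to\infty} n\,\check{\lambda}_n^{-1/q}\le \lim_{n\to\infty} n\,b_n(T)= c_{pq}\left(\int_I (uv)^{r}\,dt\right)^{1/r}.
\]
Putting the two bounds together forces the limit to exist and to equal the stated value; since $n\,\check{\lambda}_n^{-1/q}$ is squeezed between two sequences with the common limit, the proof is complete. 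One should also note, as in the introduction, that $\check{\lambda}_n^{-1/q}=\|\check g_n\|_q$ for the corresponding spectral function, so the statement is genuinely about the minimal spectral number among eigenfunctions with $n$ zeros.

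The main obstacle I anticipate is the bookkeeping in the duality reduction: one must verify carefully that the class of $(n+1)$-dimensional subspaces of $T(L_p)$ used in the definition of $b_n(T)$ corresponds exactly, under the map $f\mapsto (Tf)_{(q)}$ and the relation between (\ref{Eq 2.3-one}) and (\ref{Eq 2.3-dual}), to the class relevant for $a_n(T^{*})$, and that the spectral numbers transform by $\lambda\mapsto\lambda_{(p')}$ as recorded after (\ref{Eq 2.3-dual}) — so that $\min sp_n(T,p,q)$ maps to $\max sp_n(T^{*},q',p')$ and the exponents match. If one prefers to avoid duality altogether, the alternative is to cite \cite{EL2} directly for $\lim_n n\,b_n(T)$ (the Bernstein-number asymptotics are established there alongside the Kolmogorov ones) and then apply Lemmas \ref{bn<lambdan} and \ref{bn>lambdan} verbatim; in that case there is essentially no obstacle and the proof is a two-line sandwich argument identical in form to that of Theorem \ref{q<p}.
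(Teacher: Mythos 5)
Your fallback route is exactly the paper's proof: cite the Bernstein-width asymptotics and sandwich $n\,\check{\lambda}_n^{-1/q}$ between $n\,b_n(T)$ from above and below via Lemmas \ref{bn<lambdan} and \ref{bn>lambdan}. The only slip is the citation: the Bernstein-width limit comes from \cite{EL1} (the Bernstein-widths paper), not \cite{EL2}, which treats approximation and Kolmogorov numbers; the duality detour you sketch is unnecessary and would require justifying $b_n(T)=a_n(T^{*})$, which is not a standard identity.
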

\begin{proof}
From \cite{EL1} we have
\[\lim_{n \to \infty} n
{b_n(T)}=c_{pq}\left(\int_I |uv|^{r} dt \right)^{1/r} \] and since
 $b_n(T)\searrow 0$
then from Lemma \ref{bn<lambdan} it follows that
\[c_{pq}\left(\int_I |uv|^{r} dt
\right)^{1/r} \le \liminf_{n \to \infty} n
{\check{\lambda}_n}^{-1/q}. \]
 Moreover, from Lemma \ref{bn>lambdan} we have
\[ \limsup_{n \to \infty} n {\check{\lambda}_n}^{-1/q} \le c_{pq} \left(\int_I |uv|^{r} dt
\right)^{1/r}  \] which finishes the proof.
\end{proof}

When $p=q$ the following lemma follows from Theorem \ref{q<p} and
Theorem \ref{p<q} (we can find this result in a sharper form in
\cite{B}).

\begin{remark}
When $p=q$ then
 \[ \lim_{n \to \infty} n {\lambda_n}^{-1/q}
  =c_{pq}\left(\int_I |uv|^{r} dt \right)^{1/r}\]
  where $r=1/p'+1/q$, $c_{pq}$ as in (\ref{constant}) and ${\lambda}_n$ is the single point in
   $sp_n(p,q).$
\end{remark}

{\bf Address of authors:}

   D.E. Edmunds\\
School of Mathematics\\
Cardiff University\\
Senghennydd Road\\
   CARDIFF CF24 4YH \\
   UK\\
   e-mail: {\verb"DavidEEdmunds@aol.com"}

\

   J. Lang\\
   Department of Mathematics\\
    The Ohio State University\\
   100 Math Tower\\
    231 West 18th Avenue\\
   Columbus, OH 43210-1174\\
   USA \\
   e-mail: {\verb"lang@math.ohio-state.edu"}

\begin{thebibliography}{99}                                                                                               %

\bibitem{B} C. Bennewitz, Approximation numbers = Singular values,
Journal of Computational and Applied Mathematics, to appear.

\bibitem{B1}A.P. Buslaev, On Bernstein-Nikol'ski\u{\i} inequalities and
widths of Sobolev classes of functions, Dokl. Akad. Nauk
\textbf{323} (1992), no. 2, 202-205.

\bibitem{BT1}A.P. Buslaev and V.M. Tikhomirov, Spectra of nonlinear
differential equations and widths of Sobolev classes. Mat. Sb.
\textbf{181} (1990), 1587-1606; English transl. in Math. USSR Sb.
\textbf{71 }(1992), 427-446.


\bibitem{EE1}Edmunds, D.E. and Evans, W.D., \textit{Spectral theory and
differential operators, }Oxford University Press, Oxford, 1987.


\bibitem{EE2}D.E. Edmunds and W.D. Evans, \textit{Hardy operators, function
spaces and embeddings, }Springer, Berlin-Heidelberg-New York,
2004.

\bibitem{EL1} D.E. Edmunds and J. Lang, Bernstein widths of Hardy-type operators
in a non-homogeneous case, to be published in Journal of
Mathematical Analysis and Applications.

\bibitem{EL2} D.E. Edmunds and J. Lang, Approximation numbers and Kolmogorov
widths of Hardy-type operators in a non-homogeneous case,
Mathematische Nachrichten, \textbf{279} (2006), no. 7, 727-742.

\bibitem{N1}Nguen, T'en Nam, The spectrum of nonlinear integral equations and
widths of function classes, Math. Notes \textbf{53 }(1993), no.
3-4, 424-429.

\bibitem{KJF} A. Kufner, O. John and S. Fucik, \it{ Function
spaces}, Noordhoff International Publishing, Leyden, (1977).


\end{thebibliography}
\end{document}